\newtheorem{thm}{Theorem}[section]
\newtheorem{lem}[thm]{Lemma}
\newtheorem{prop}[thm]{Proposition}
\newtheorem{eks}{\sc Example}
\theoremstyle{definition}
\newtheorem{defn}[thm]{Definition}
\theoremstyle{remark}
\newtheorem{rem}{Remark}[section]
\numberwithin{equation}{section}
\newcommand{\xs}{x^\ast}
\newcommand{\Xs}{X^{\ast}}
\newcommand{\Xss}{{X^{\ast\ast}}}
\newcommand{\Xsss}{X^{\ast\ast\ast}}
\newcommand{\xss}{x^{\ast\ast}}
\newcommand{\Ys}{Y^{\ast}}
\newcommand{\ys}{y^\ast}
\newcommand{\yss}{y^{\ast\ast}}
\newcommand{\Yss}{Y^{\ast\ast}}
\newcommand{\Zs}{Z^\ast}
\newcommand{\zs}{z^\ast}
\newcommand{\RNP}{Radon-Nikod\'{y}m property\xspace}
\DeclareMathOperator{\conv}{conv}
\begin{document}

\title[Remarks on diameter 2 properties]%
{Remarks on diameter 2 properties}%

\author[T.~A. Abrahamsen]{Trond Abrahamsen}
\author[V.~Lima]{Vegard Lima}
\author[O.~Nygaard]{Olav Nygaard}

\address{Aalesund University College, Postboks 1517,
N-6025 {\AA}lesund, Norway.}
\email{Vegard.Lima@gmail.com}

\address{Department of Mathematics, Agder University, Servicebox 422,
4604 Kristiansand, Norway.}
\email{Trond.A.Abrahamsen@uia.no}
\urladdr{http://home.uia.no/trondaa/index.php3}
\email{Olav.Nygaard@uia.no}
\urladdr{http://home.uia.no/olavn/}

%\thanks{}
%
\subjclass[2010]{46B20; 46B22}%
\keywords{diameter 2, slice, Daugavet property, $M$-embedded, uniform
algebra}%
%
% ----------------------------------------------------------------
\begin{abstract}
  If $X$ is an infinite-dimensional uniform algebra, if $X$ has the
  Daugavet property or if $X$ is a proper $M$-embedded space, every
  relatively weakly open subset of the unit ball of the Banach space
  $X$ is known to have diameter 2, i.e., $X$ has the diameter 2
  property. We prove that in these three cases even every finite
  convex combination of relatively weakly open subsets of the unit
  ball have diameter 2. Further, we identify new examples of spaces
  with the diameter 2 property outside the formerly known cases; in
  particular we observe that forming $\ell_p$-sums of diameter 2
  spaces does not ruin diameter 2 structure.
\end{abstract}
\maketitle
% ----------------------------------------------------------------

\section{Introduction}

Let $X$ be a (real) Banach space and $B_X$ its unit ball.
By a slice of $B_X$
we mean a set of the type
$S(\xs, \varepsilon)=\{x\in B_X\::\xs(x)>1-\varepsilon\}$
where $\xs$ is in the unit sphere $S_{\Xs}$ of $\Xs$
and $\varepsilon > 0$.
A finite convex combination of slices of $B_X$
is then a set of the form
\begin{equation*}
  S=\sum_{i=1}^{n}\lambda_i S(\xs_i,\varepsilon_i),\:\:\lambda_i \ge
  0,\:\:\sum_{i=1}^{n}\lambda_i=1,
\end{equation*}
where $\xs_i \in S_{\Xs}$ and
$\varepsilon_i > 0$ for $i=1,2,\ldots,n$.

Let us consider the following three properties:
\begin{defn}\label{defn:diam2p}
  A Banach space $X$ has the
  \begin{enumerate}
  \item \emph{local diameter 2 property}
    if every slice of $B_X$ has diameter 2.
  \item \emph{diameter 2 property}
    if every non-empty relatively weakly open subset of $B_X$ has
    diameter 2.
  \item \emph{strong diameter 2 property}
    if every finite convex combination of slices
    of $B_X$ has diameter 2.
  \end{enumerate}
\end{defn}

Before going further, let us just remark that the following
implications hold true for the properties in
Definition \ref{defn:diam2p}: 
\begin{equation*}
  \text{strong diameter 2} \Rightarrow \text{diameter 2} \Rightarrow
  \text{local diameter 2}.
\end{equation*}
The second implication is clear since every slice of $B_X$ is a
non-empty relatively weakly open subset of $B_X$. The first
implication is a consequence of a lemma by Bourgain saying that every
non-empty relatively weakly open subset of $B_X$ contains a finite
convex combination of slices (see \cite[Lemma~II.1~p.~26]{GGMS}). Note
that a finite convex combination of slices need not be relatively
weakly open (it may be contained in $\delta B_X$, where $\delta<1$,
see \cite[Remark~IV.5~p.~48]{GGMS}, and recall that any non-empty
relatively weakly open subset of $B_X$ must intersect $S_X$ when $X$ is
infinite-dimensional.)  Note also that the strong diameter 2 property
implies that every non-empty finite convex combination of relatively
weakly open sets in $B_X$ has diameter 2.

To the best of our knowledge it is not known whether any of the
reverse implications hold.  Nor do we know of any general
characterizations of the diameter 2 properties.  The diameter 2
property as a term has been in use for some years now.  The idea of
studying its local and strong versions appears to be new.

It is an interesting exercise to show that the classical spaces $c_0$,
$C[0,1]$ and $L_1[0,1]$ have the diameter 2 properties.  Using
dentability it is clear that spaces with the \RNP cannot have the
local diameter 2 property.  It is also clear that spaces with the
point of continuity property cannot have the diameter 2 property.
Thus diameter 2 properties are at the opposite side of the spectrum
from the Radon-Nikod\'{y}m and point of continuity properties.  Note
that the predual $B$ of the James tree space has the point of
continuity property but lacks the \RNP (see \cite[Example~6.(1)]{EW}).
Since the point of continuity property is preserved by renormings, $B$
can not be renormed to have the diameter $2$ property.  It is an open
question (see e.g. \cite[p.~553]{BGLP}) whether every Banach space
failing the \RNP can be renormed to have the
local diameter $2$ property.

Section 2 is a survey of examples and results on diameter 2
properties. Particular emphasis has been put on checking which of the
diameter 2 properties that in fact are known to hold in each
case. Through this survey we will motivate the research we have done,
and the reader can easily see our main theorems in the light of known
results. 

Section 3 contains our perhaps most surprising result.
While in Section 2 spaces with $L$- and $M$-structure
dominate the landscape of diameter 2 spaces,
we show that the diameter 2 property is actually
preserved when taking any $\ell_p$-sum, $1\le p \le \infty$,
of diameter 2 spaces.
In particular, $c_0 \oplus_2 c_0$ has the diameter 2 property,
and we show that this example is not covered
by the results in Section~2.

In Section~4 we show that spaces with the Daugavet property,
infinite-dimensional uniform algebras, proper $M$-embedded spaces, and
biduals of proper $M$-embedded spaces all have the strong diameter 2
property.

We end the paper with some concluding remarks and questions.

We use standard Banach space terminology and notation.

\section{A survey of examples and results on diameter 2 properties}

We will now try to give an overview of \em general \em results on
diameter 2 properties for Banach spaces.

Recall that a Banach space $X$ has the Daugavet property if 
for every rank one operator $T: X \to X$ the equation
$\|I+T\| =  1 + \|T\|$ holds where $I$ is the identity operator on $X$.
One can show that the Daugavet property is equivalent to each of the
following two statements (see \cite{W} or \cite{SHV}):
\begin{itemize}
\item[1.] For every slice $S=S(\xs_0,\varepsilon_0)$ of $B_X$, every
  $x_0 \in S_X$, and every $\varepsilon > 0$
  there exists a point $x \in S$ such that
  $\|x + x_0\| \ge 2 - \varepsilon$.
\item[2.] For every $\varepsilon > 0$, every $x \in
  S_X$, $B_X = \overline{\conv}\Delta_\varepsilon(x)$ where
  $\Delta_\varepsilon(x)=\{y \in B_X: \|y-x\| \ge 2-\varepsilon\}$.
\end{itemize}
The first general results date some ten years back. From \cite{SHV}
and \cite{NW} we have:

\begin{thm}\label{teo1}
  Let $X$ be a space with the Daugavet property or an
  infinite-dimensional uniform algebra. Then $X$ has the diameter 2
  property.
\end{thm}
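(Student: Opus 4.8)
The plan is to treat the two cases separately, off a common reduction. First note that a space with the Daugavet property is automatically infinite-dimensional, so in both cases $X$ is infinite-dimensional, and then every non-empty relatively weakly open $W\subseteq B_X$ meets $S_X$: if $W$ contains the basic neighbourhood $\{y\in B_X:|x_i^*(y-x_0)|<\varepsilon,\ i=1,\dots,n\}$ with $x_0\in W$, then the affine subspace $x_0+\bigcap_i\ker x_i^*$ has finite codimension, hence is unbounded, hence meets $S_X$, and any such intersection point lies in $W$. It therefore suffices, for a given $\varepsilon>0$, to produce a point $x\in W\cap S_X$ together with a point $y\in W$ satisfying $\|x-y\|>2-\varepsilon$; letting $\varepsilon\to 0$ this forces $W$ to have diameter $2$.

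For the Daugavet case, fix such an $x$ and a basic convex neighbourhood $V=\{y\in B_X:|x_i^*(y)-x_i^*(x)|<\varepsilon,\ i=1,\dots,n\}\subseteq W$; the target is a point $y\in V$ with $\|y-x\|>2-\varepsilon$, that is, a point of $V\cap\Delta_\varepsilon(x)$ where $\Delta_\varepsilon(x)=\{z\in B_X:\|z-x\|\ge 2-\varepsilon\}$. That a \emph{slice} has diameter $2$ is immediate from the slice form of the Daugavet property (given a slice $S$, apply it with the point $-y_0$ for a fixed $y_0\in S\cap S_X$ to obtain $y_1\in S$ with $\|y_1-y_0\|\ge 2-\varepsilon$). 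The real obstacle is that $V$ need not contain a slice and the produced point has to lie in all $n$ slabs at once: a Hahn--Banach separation against $\overline{\conv}\Delta_\varepsilon(x)=B_X$ is useless here, since the complement of $V$ in $B_X$ is only a union of half-spaces, and one cannot simply average over a Bourgain convex combination of slices contained in $V$, because a convex combination of points each far from $x$ may be close to $x$ --- this ``alignment'' difficulty is exactly what makes the strong diameter~2 property of Section~4 strictly stronger. The route I would take is the one of \cite{SHV}: use that the Daugavet property persists for (weakly) compact operators, apply the Daugavet equation $\|I+T\|=1+\|T\|$ to a finite-rank operator $T$ assembled from $x_1^*,\dots,x_n^*$, and extract from the near-norming vectors a point which, after translation by a suitable element of $\bigcap_i\ker x_i^*$, lands in $V$ at distance nearly $2$ from $x$. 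This passage from slices to arbitrary relatively weakly open sets is the only non-routine step.

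For the uniform algebra case, write $A\subseteq C(K)$ with $K$ compact; since $A$ is infinite-dimensional, its Shilov boundary --- hence its set of peak points --- is infinite. Representing the functionals defining a basic neighbourhood $V\subseteq W$ by measures $\nu_1,\dots,\nu_n$ on $K$, I would choose a peak point $t_0$ that is not an atom of any $|\nu_i|$ (possible since each $|\nu_i|$ has only countably many atoms). Starting from any $f\in V$, the aim is to build $f_1,f_2\in V$ with $\|f_1-f_2\|$ close to $2$ by perturbing $f$ near $t_0$ by a high power $h^m$ of a peak function $h$ for $t_0$: schematically $f_1=f(1-h^m)+\omega_1 h^m$ and $f_2=f(1-h^m)+\omega_2 h^m$ with modulus-one scalars $\omega_1,\omega_2$, so that $f_1(t_0)-f_2(t_0)=\omega_1-\omega_2$ already has modulus close to $2$. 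That $f_1,f_2$ still lie in $V$ is a dominated-convergence statement, since $h^m\to 0$ pointwise off $t_0$ while $|\nu_i|(\{t_0\})=0$. The genuine difficulty --- and the reason the construction has to be carried out carefully, as in \cite{NW} --- is keeping $f_1,f_2$ in the unit ball: $f(1-h^m)\pm\omega h^m$ need not have norm $\le 1$, because powers of a peak function wind in argument, so $|1-h^m|+|h^m|$ can exceed $1$ on the transition annulus around $t_0$. Managing this requires a delicate choice of the peaking function, of the point $t_0$, and of the starting point $f$, and this is where the substance of the argument lies.
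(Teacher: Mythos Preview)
Your outline is essentially the plan of the two cited references, \cite{SHV} and \cite{NW}, and that is exactly how the paper treats Theorem~\ref{teo1}: it is stated as a survey result with no proof, attributed to those papers. So in that narrow sense your proposal matches the paper --- both defer to \cite{SHV} and \cite{NW}.

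Where the comparison becomes interesting is that the paper, in Section~\ref{strongsection}, proves the \emph{strong} diameter~2 property in both cases, and its arguments are organised differently from your sketch. For uniform algebras, instead of directly producing two far-apart points in a single weakly open set, the paper isolates the abstract sufficient condition of Lemma~\ref{strongsufficient}: for any finite family of slices $S_1,\dots,S_n$ there exist $h_j\in S_j$ and a single $\varphi$ of norm close to $1$ with $h_j\pm\varphi\in S_j$. The peak-function machinery you describe from \cite{NW} is then re-examined to verify this uniform condition, which immediately yields the strong property (and hence the ordinary one via Bourgain's lemma). Your construction is in the same spirit but aimed at a single neighbourhood; the paper's packaging is what makes the upgrade to convex combinations of slices automatic.

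For the Daugavet case, your plan --- build a finite-rank $T$ from $x_1^*,\dots,x_n^*$ and exploit $\|I+T\|=1+\|T\|$ --- is indeed Shvydkoy's original route to the diameter~2 property. The paper instead extracts from \cite{KSSW} the slice-refinement Lemma~\ref{lem:shvydkoylem2.2} (for any $y_0$ and any slice, there is a subslice on which $\|\lambda x+y_0\|\ge\lambda+\|y_0\|-\varepsilon$), applies it $n$ times to load a convex combination $\sum\lambda_j x_j$ near $S_X$, and then repeats with $-y_0$ replaced by this combination to get a second point nearly $2$ away. This two-pass iteration handles convex combinations of slices directly, bypassing the finite-rank operator assembly. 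Both approaches ultimately rest on the Daugavet equation, but the paper's is tailored to the strong property, while yours reproduces only the diameter~2 conclusion of Theorem~\ref{teo1}.
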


In Section \ref{strongsection} we will prove that spaces with the
Daugavet property and infinite-dimensional uniform algebras in fact
have the strong diameter 2 property. 

Note that the Daugavet property can be weakened as in the proposition
below (see also Problem (7) in \cite{W}) and still imply the local
diameter 2 property. 

\begin{prop}
  Let $X$ be a Banach space such that $x \in
  \overline{\conv}\Delta_\varepsilon(x)$
  for every $x \in S_X$ and $\varepsilon > 0$.
  Then $X$ has the local diameter 2 property. 
\end{prop}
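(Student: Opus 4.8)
\emph{Proof proposal.} The plan is to reduce the diameter estimate for an arbitrary slice to a single application of the hypothesis at a well-chosen norm-one point of that slice. So fix a slice $S=S(\xs,\varepsilon)$ of $B_X$ with $\xs\in S_{\Xs}$. The first step is the observation that $S$ always contains a point of the unit sphere: picking any $y\in S$ and normalizing, the point $x:=y/\|y\|$ still satisfies $\xs(x)\ge \xs(y)>1-\varepsilon$ because $\|y\|\le 1$, so $x\in S\cap S_X$. Since $\operatorname{diam} S\le 2$ trivially, it will be enough to prove $\operatorname{diam} S\ge 2-\eta$ for every $\eta>0$.

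Next I would fix $\eta>0$ and apply the hypothesis to this particular $x$ and to $\eta$, which gives $x\in\overline{\conv}\,\Delta_\eta(x)$. Using that $\xs(x)>1-\varepsilon$ together with the (norm-)continuity of $\xs$, I can choose a convex combination $z=\sum_{i=1}^{n}\lambda_i z_i$ with each $z_i\in\Delta_\eta(x)$ and with $\|z-x\|$ small enough that $\xs(z)>1-\varepsilon$. Then $\sum_{i}\lambda_i\xs(z_i)=\xs(z)>1-\varepsilon$, so some index $i_0$ satisfies $\xs(z_{i_0})>1-\varepsilon$; since $z_{i_0}\in\Delta_\eta(x)\subseteq B_X$ this means $z_{i_0}\in S$. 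On the other hand $z_{i_0}\in\Delta_\eta(x)$ also yields $\|z_{i_0}-x\|\ge 2-\eta$. As both $x$ and $z_{i_0}$ lie in $S$, we conclude $\operatorname{diam} S\ge 2-\eta$, and letting $\eta\to 0$ finishes the argument.

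I do not expect a serious obstacle here. The only point that needs a little care is the passage from the membership $x\in\overline{\conv}\,\Delta_\eta(x)$ to the existence of a single element of $\Delta_\eta(x)$ lying inside the slice $S$; this is precisely what the averaging argument above accomplishes, once one knows the slice contains a norm-one point so that the hypothesis is applicable. Everything else is routine.
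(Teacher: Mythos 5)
Your proposal is correct and follows essentially the same route as the paper's proof: pick a (norm-one) point $x$ of the slice, use the hypothesis to approximate $x$ by a convex combination of points of $\Delta_\eta(x)$, and argue by averaging that at least one of these points must itself lie in the slice, giving two points of the slice at distance $\ge 2-\eta$. The only cosmetic remark is that the normalization $y/\|y\|$ needs $\xs(y)>0$; it is simpler to choose $x\in S_X$ with $\xs(x)$ close to $1$ directly from $\|\xs\|=1$, which is what the paper does.
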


\begin{proof}
  Let $\xs \in S_{\Xs}$ and $\varepsilon > 0$. Find $x \in B_X$ such
  that $\xs(x) \ge 1 -\varepsilon/2$ and a convex combination
  $\sum_{i=1}^n \lambda_i y_i$ where $y_i \in \Delta_\varepsilon(x)$
  such that $\|\sum_{i=1}^n \lambda_i y_i - x\| \le \varepsilon/2$. In
  particular $\xs(\sum_{i=1}^n \lambda_i y_i) \ge 1-\varepsilon$. If
  $\xs(y_i) < 1 - \varepsilon$ for every $i=1,2,\ldots,n$, then
  \begin{equation*}
    \xs(\sum_{i=1}^n \lambda_i y_i) < 1 - \varepsilon.
  \end{equation*}
  This is a
  contradiction, so there is at least one $i$ such that $\|x - y_{i}\|
  \ge 2 - \varepsilon$ with $x$ and $y_{i}$ in the slice
  $S(\xs,\varepsilon) = \{z \in B_X: \xs(z) \ge 1-\varepsilon\}$.
\end{proof}

Recall that a closed subspace $X$ of a Banach space $Y$ is an
$L$-summand (u-summand) in $Y$ if there is a subspace $Z$, the
$L$-complement (u-complement) of $X$, so that $X \oplus Z =Y$ and if
$x \in X, z \in Z$ then $\|x+z\|=\|x\|+\|z\|$ ($\|x+z\|=\|x-z\|$).  If
the annihilator $X^{\perp}$ of $X$ is an $L$-summand (u-summand) in
$\Ys$, then $X$ is said to be an $M$-ideal (u-ideal) in $Y$. Banach
spaces which are $M$-ideals in their biduals are called
$M$-embedded. Strict u-ideals are u-ideals in their biduals where the
u-complement of $X^\perp$ is $\Xs$. 

In \cite{BLPR} a very important observation is the following:

\begin{thm}\label{teo2} If $\Xs$ is a proper $L$-summand in $\Xsss$,
  then $X$ has the diameter 2 property. In particular, proper
  $M$-embedded spaces have the diameter 2 property.
\end{thm}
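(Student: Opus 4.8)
The plan is to climb to the fourth dual $X^{(4)}:=(\Xsss)^{\ast}$, where the $L$-decomposition of $\Xsss$ turns into an $\ell_{\infty}$-decomposition admitting a costless perturbation, and then to transport diameters back down to $X$ by weak$^{\ast}$-density. First I would reduce to a basic set: every non-empty relatively weakly open subset of $B_X$ contains one, so it suffices to show $\operatorname{diam}(W)=2$ for $W=\{x\in B_X:\ |\xs_i(x)-\xs_i(x_0)|<\varepsilon\ (i=1,\dots,n)\}$ with $x_0\in B_X$, $\xs_1,\dots,\xs_n\in\Xs$, $\varepsilon>0$. The second ingredient I would record is the elementary principle that $\operatorname{diam}_Z(A)=\operatorname{diam}_{Z^{\ast\ast}}(\overline{A}^{w^{\ast}})$ for every bounded non-empty $A\subseteq Z$, the weak$^{\ast}$-closure being taken in $Z^{\ast\ast}$: if $d=\operatorname{diam}(A)$ then $A$ lies in each closed ball $\overline{B}_{Z^{\ast\ast}}(a,d)$, $a\in A$, and these balls are weak$^{\ast}$-closed, so $\overline{A}^{w^{\ast}}$ lies in $\overline{B}_{Z^{\ast\ast}}(u,d)$ for every $u\in\overline{A}^{w^{\ast}}$; hence $\operatorname{diam}(\overline{A}^{w^{\ast}})\le d$, the reverse inequality being obvious.

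Applying this twice, first with $Z=X$ and then with $Z=\Xss$, and using Goldstine's theorem, I would obtain
\[
\operatorname{diam}_X(W)=\operatorname{diam}_{\Xss}\big(\overline{W}^{w^{\ast}}\big)\ \ge\ \operatorname{diam}_{\Xss}(\widetilde W)=\operatorname{diam}_{X^{(4)}}\big(\overline{\widetilde W}^{w^{\ast}}\big)\ \ge\ \operatorname{diam}_{X^{(4)}}(\widehat W),
\]
where $\widetilde W=\{\xss\in B_{\Xss}:\ |\xss(\xs_i)-\xs_i(x_0)|<\varepsilon\}$ and $\widehat W=\{\xi\in B_{X^{(4)}}:\ |\xi(\xs_i)-\xs_i(x_0)|<\varepsilon\}$. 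The equalities are the transfer principle; the inequalities follow from $\widetilde W\subseteq\overline{W}^{w^{\ast}}$ and $\widehat W\subseteq\overline{\widetilde W}^{w^{\ast}}$, which hold because $W=\widetilde W\cap B_X$ and $\widetilde W=\widehat W\cap B_{\Xss}$, the larger sets $\widetilde W$, $\widehat W$ being relatively weak$^{\ast}$-open and the smaller balls $B_X$, $B_{\Xss}$ being weak$^{\ast}$-dense by Goldstine.

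It then remains to see $\operatorname{diam}_{X^{(4)}}(\widehat W)=2$, which is the only place the hypothesis is used. Write $\Xsss=\Xs\oplus_1 W_0$ with $W_0\neq\{0\}$; dualising the $\ell_1$-sum gives $X^{(4)}=\Xss\oplus_{\infty}W_0^{\ast}$. I would choose $\theta\in W_0^{\ast}$ with $\|\theta\|=1$, regarded as the functional on $\Xsss$ that vanishes on the summand $\Xs$, so $\|\theta\|_{X^{(4)}}=1$ and $\theta(\xs_i)=0$ for all $i$; and I would let $\xi_0\in X^{(4)}$ be the functional vanishing on $W_0$ and equal, on $\Xs$, to $x_0$ (viewed in $\Xss$), so $\|\xi_0\|=\|x_0\|\le1$ and $\xi_0(\xs_i)=\xs_i(x_0)$. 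Then $\xi_0+\theta$ and $\xi_0-\theta$ both belong to $\widehat W$ — their norms equal $\max(\|x_0\|,1)=1$ and their values at each $\xs_i$ equal $\xs_i(x_0)$ — while $\|(\xi_0+\theta)-(\xi_0-\theta)\|=2\|\theta\|=2$ because the two summands sit in $\ell_{\infty}$-position. Hence $\operatorname{diam}_{X^{(4)}}(\widehat W)=2$, and the displayed chain forces $\operatorname{diam}_X(W)\ge2$, so $\operatorname{diam}_X(W)=2$. For the last assertion, a proper $M$-embedded space satisfies $\Xsss=\Xs\oplus_1 X^{\perp}$ with $X^{\perp}\neq\{0\}$, hence is covered by what was just proved.

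The one genuine difficulty is this final step, and it explains why one must climb two duals rather than one. In $\Xss$ itself there is no usable perturbation: Goldstine controls no norms, and for reflexive $X$ one truly cannot move away from $x_0$ inside a small relatively weakly open set, so some structural input is unavoidable. Passing to $X^{(4)}$ is precisely what converts the $L$-summand $\Xs$ of $\Xsss$ into a non-trivial $\ell_{\infty}$-summand of $X^{(4)}$, in which a norm-one vector of the complementary summand can be added to $\xi_0$ with either sign at no cost to the norm, yielding the two points of $\widehat W$ at distance $2$; the transfer principle then pushes this diameter all the way down to $W\subseteq B_X$.
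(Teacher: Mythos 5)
Your proof is correct. Note first that the paper offers no proof of Theorem~\ref{teo2} at all --- it is quoted from \cite{BLPR} --- and the nearest in-house argument is Theorem~\ref{thm-memb-strong}, which obtains the stronger conclusion (the \emph{strong} diameter 2 property) in the proper $M$-embedded case by genuinely different means: there the perturbation $\varphi$ fed into Lemma~\ref{strongsufficient} is produced by Werner's $M$-ideal ``basic inequality'' (a net $(y_d)$ in $Y$ with $\limsup_d\|y_0^i\pm(x-y_d)\|\le 1$), and the conclusion is moved from $Y^{\perp\perp}$ to $\Xss$ through the $\ell_\infty$-decomposition and then down to $X$ by Goldstine. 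Your route is more elementary and, unlike Theorem~\ref{thm-memb-strong}, covers the full hypothesis of Theorem~\ref{teo2}: only $L$-structure on $\Xsss$ is used, no $M$-ideal structure on $X$ itself. In $X^{(4}=\Xss\oplus_\infty W_0^{\ast}$ the perturbation is the costless one --- a norm-one $\theta$ annihilating $\Xs$, hence invisible to the finitely many functionals $\xs_i$ defining the basic set and addable to $\xi_0$ with either sign without leaving the ball. The supporting steps all check out: $\operatorname{diam}_Z(A)=\operatorname{diam}_{Z^{\ast\ast}}(\overline{A}^{w^{\ast}})$ holds because closed balls centred at points of the weak$^{\ast}$ closure are weak$^{\ast}$ closed, and the inclusions $\widetilde{W}\subseteq\overline{W}^{w^{\ast}}$ and $\widehat{W}\subseteq\overline{\widetilde{W}}^{w^{\ast}}$ follow from Goldstine because $\widetilde{W}$ and $\widehat{W}$ are relatively weak$^{\ast}$ open and trace back to $W$ and $\widetilde{W}$ on the weak$^{\ast}$-dense balls. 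What the paper's heavier machinery buys is the strong diameter 2 property; it is worth observing, though, that your construction applied to the lifted slices $\widehat{S}_j=\{\xi\in B_{X^{(4}}:\xi(\xs_j)>1-\varepsilon_j\}$, with $\xi_j\pm\theta\in\widehat{S}_j$, gives diameter 2 for $\sum_j\lambda_j\widehat{S}_j$ as well, and since $\sum_j\lambda_j\widehat{S}_j\subseteq\overline{\sum_j\lambda_j S_j}^{w^{\ast}}$ the same transfer principle would yield the strong diameter 2 property under your weaker hypothesis.
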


The authors of \cite{BLPR} prove that real $JB^\ast$-triples over
non-reflexive Banach spaces get the diameter 2 property from Theorem
\ref{teo2}. 
The role of $M$-structure is taken further in \cite{LP}:

\begin{thm}\label{teo3} 
Assume $Y$ is a proper $M$-ideal in $X$ and put $\Xs = Z \oplus_1
Y^{\perp}$. If $Z$ is 1-norming for $X$, then both $X$ and $Y$ have
the diameter 2 property. In particular, if $Y$ is a proper
$M$-embedded space, then both $Y$ and $\Yss$ have the diameter 2
property. In fact, in the latter case, every subspace $Z$ of $\Yss$
which contains $Y$ has the diameter 2 property.
\end{thm}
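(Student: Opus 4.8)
The plan is to imitate the proof of Theorem \ref{teo2}, using the $M$-ideal structure to manufacture the functional that the $L$-summand hypothesis provides there. First I would unpack the hypothesis. Since $Y$ is an $M$-ideal in $X$, the restriction map $\Xs\to\Ys$ sends the $L$-complement $Z$ of $Y^{\perp}$ isometrically onto $\Ys$, so the canonical map $X\to Z^{\ast}=\Yss$ extends the embedding of $Y$; ``$Z$ is $1$-norming for $X$'' says precisely that this map is an isometry. Thus I would regard $Y\subseteq X\subseteq\Yss$, with $\Xs=Z\oplus_1 Y^{\perp}$, $Z$ the canonical copy of $\Ys$, and — dualising — $\Xss=\Yss\oplus_{\infty}Z^{\perp}$, where $\Yss$ here is the bidual of $Y$ (the weak$^{\ast}$-closure of $Y$ in $\Xss$) and every $x\in X$ has $\Yss$-coordinate of norm $\|x\|$.

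The engine is a single norm-one functional $\Phi_0\in Y^{\ast\ast\ast}=(\Yss)^{\ast}$. Because $Y$ is a \emph{proper} $M$-ideal, there is $\psi\in Y^{\perp}\subseteq\Xs$ with $\|\psi\|=1$, and $\psi$ is $L$-orthogonal to $Z$. I would extend $\psi$, via the isometry $X\hookrightarrow\Yss$, by Hahn--Banach to $\Phi_0\in Y^{\ast\ast\ast}$ with $\|\Phi_0\|=1$. Then $\Phi_0$ vanishes on $Y$, and restricting back to $X$ and using both that $\psi$ is $L$-orthogonal to $Z$ and that $Z\cong\Ys$ isometrically, one gets $\|\Phi_0-\ys\|=1+\|\ys\|$ for every $\ys\in\Ys$ (identified with its canonical image in $Y^{\ast\ast\ast}$); i.e. $\Phi_0$ is $L$-orthogonal to $\Ys$.

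Now for $Y$: given a nonempty relatively weakly open $U\subseteq B_Y$, shrink it to $\{y\in B_Y:|\ys_i(y)-c_i|<\delta,\ i=1,\dots,n\}$ with some $y_0$ inside and $c_i=\ys_i(y_0)$. The set $C=\{\phi\in B_{\Yss}:\phi(\ys_i)=c_i\ \forall i\}$ is weak$^{\ast}$-compact and nonempty ($\widehat{y_0}\in C$), and the Hahn--Banach duality identity $\sup\{\Phi_0(\phi):\phi\in C\}=\inf_t\bigl(\sum_i t_ic_i+\|\Phi_0-\sum_i t_i\ys_i\|\bigr)$, together with $\|\Phi_0-\sum_i t_i\ys_i\|=1+\|\sum_i t_i\ys_i\|$ and $\sum_i t_ic_i\ge-\|\sum_i t_i\ys_i\|$, shows this supremum is $1$ (and, with $-\Phi_0$, so is the infimum of $\Phi_0$ over $C$). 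So there are $\phi_1,\phi_2\in C$ with $\Phi_0(\phi_1)$ near $1$ and $\Phi_0(\phi_2)$ near $-1$, hence $\|\phi_1-\phi_2\|$ near $2$; picking $\ys\in S_{\Ys}$ nearly attaining this distance and using Goldstine's theorem, I would pull $\phi_1,\phi_2$ back to $y_1,y_2\in U$ with $\|y_1-y_2\|$ arbitrarily near $2$. For $X$: reduce likewise to $\{x\in B_X:|x_i^{\ast}(x)-c_i|<\delta\}$, write $x_i^{\ast}=z_i+w_i$ with $z_i\in Z$, $w_i\in Y^{\perp}$, put $\ys_i=z_i|_Y$, and \emph{freeze} the $Z^{\perp}$-coordinate $\nu_0$ of $\widehat{x_0}$ in $\Xss=\Yss\oplus_{\infty}Z^{\perp}$; running the previous paragraph in the $\Yss$-coordinate with right-hand sides $c_i-w_i(x_0)$ produces $\mu_1,\mu_2\in B_{\Yss}$ with $\|\mu_1-\mu_2\|$ near $2$, and then $\phi_k:=\mu_k\oplus\nu_0\in B_{\Xss}$ satisfies $x_i^{\ast}(\phi_k)=c_i$ and $\|\phi_1-\phi_2\|=\|\mu_1-\mu_2\|$; Goldstine and a norming functional finish as before.

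This proves the general statement, and the ``in particular'' follows because a proper $M$-embedded $Y$ is a proper $M$-ideal in $\Yss$ and, by the restriction property of $M$-ideals, in every $W$ with $Y\subseteq W\subseteq\Yss$, the relevant $L$-complement being $1$-norming in each case — so the general statement, applied with $X$ replaced by $W$, covers $Y$, $\Yss$ and every such $W$. I expect the main obstacle to be not the closing Hahn--Banach/Goldstine scheme (that is already in Theorem \ref{teo2}) but pinning down how the $1$-norming hypothesis is to be used: it is what makes $X\hookrightarrow\Yss$ isometric, so that $\Phi_0$ has norm $1$ and the constraint set inside $B_{\Yss}$ is non-empty, and it is what lets the awkward $Y^{\perp}$-parts $w_i$ of the defining functionals be swept into a single frozen coordinate.
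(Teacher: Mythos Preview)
Your argument is correct. Note, though, that Theorem~\ref{teo3} appears in the paper's survey section as a citation from \cite{LP}; the paper does not prove it there. What the paper \emph{does} prove is the stronger Theorem~\ref{thm-memb-strong} (the strong diameter 2 property under the same hypotheses, minus the intermediate-subspace clause), and that proof takes a genuinely different route from yours.

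You manufacture a single $\Phi_0\in Y^{\ast\ast\ast}$ that is $L$-orthogonal to $Y^{\ast}$---using the $1$-norming hypothesis precisely to make $X\hookrightarrow Y^{\ast\ast}$ isometric so that the Hahn--Banach extension of $\psi\in Y^{\perp}$ keeps norm $1$---and then run a Hahn--Banach/Helly duality to force $\Phi_0$ to nearly attain $\pm 1$ on the constraint set $C\subset B_{Y^{\ast\ast}}$, finishing by Goldstine; this imitates the $L$-summand argument behind Theorem~\ref{teo2}. The paper's proof of Theorem~\ref{thm-memb-strong} is instead constructive inside $X$: given slices $S_i$ one fixes $y_0^i\in S_i\cap B_Y$, picks $x\in S_X$ with $\|x+Y\|>1-\delta$, invokes Werner's $M$-ideal net lemma \cite[Proposition~2.3]{W2} to obtain $(y_d)\subset Y$ with $y_d\to x$ in $\sigma(X,Z)$ and $\limsup_d\|y_0^i\pm(x-y_d)\|\le 1$, and then feeds $h_i=ry_0^i$, $\varphi=r(x-y_d)$ into Lemma~\ref{strongsufficient}. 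Your approach is more dual and recovers only the (non-strong) diameter 2 property, but it handles all intermediate subspaces $Y\subset W\subset Y^{\ast\ast}$ in a single stroke; the paper's approach is more hands-on, yields the strong property, but as written does not address the intermediate subspaces.
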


In \cite{BGRP} the centralizer is introduced to the study of the
diameter 2 property. The centralizer of $X$ (we write $Z(X)$), is the
set of those multipliers $T$ on $X$ such that there exists a
multiplier $S$ on $X$ satisfying $a_S(p)=\overline{a_T(p)}$ for every
extreme point $p$ of $B_{\Xs}$. Recall that a multiplier on $X$ is a
bounded linear operator $T$ on $X$ such that every extreme point of
$B_{\Xs}$ becomes an eigenvector for the adjoint $T^\ast$ of $T$. Thus
given a multiplier $T$ on $X$ and an extreme point $p$ of $B_{\Xs}$,
there exists a unique number $a_T(p)$ satisfying $p \circ T
=a_T(p)p$. 

Let $X$ be a Banach space. Using the notation of \cite{ABG} consider
the increasing sequence of even duals
\begin{equation*}
  X \subseteq \Xss \subseteq X^{(4} \cdots \subseteq X^{(2n} \subseteq
  \cdots,
\end{equation*}
and define $X^{(\infty}$ as the completion of the normed space
$\cup_{n=0}^\infty X^{(2n}$. 

In \cite{BGRP} the following result is proved:

\begin{thm}\label{teo4}
  If $Z(X^{(\infty})$ of a Banach space $X$ is
  infinite-dimensional, then $X$ has the diameter 2 property. In fact,
  if $X$ fails the diameter 2 property, then $\sup_n\dim
  Z(X^{(2n})<\infty$.
\end{thm}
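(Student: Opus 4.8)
The plan is to derive the result from a structural dichotomy for the even duals of $X$ and then to transfer the diameter $2$ property from $X^{(\infty}$ down to $X$; the transfer is where I expect the genuine difficulty. First, for the reduction of the ``in fact'' clause to the main implication: the biadjoint $T^{\ast\ast}$ of a multiplier $T$ on $X^{(2n}$ is a multiplier on $X^{(2n+2}$, since the relations $p\circ T^{\ast\ast}=a_T(p)\,p$ persist for extreme points $p$ of $X^{(2n+3}$ (because $B_{X^{(2n+1}}$ is weak$^\ast$-dense in $B_{X^{(2n+3}}$ and extreme points transfer) and $T\mapsto T^{\ast\ast}$ respects the conjugation condition defining $Z(\cdot)$. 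Hence $Z(X^{(2n})$ embeds isometrically into $Z(X^{(2n+2})$, the sequence $(\dim Z(X^{(2n}))_n$ is non-decreasing, and one checks that $Z(X^{(\infty})$ is the closed linear span of $\bigcup_n Z(X^{(2n})$ (the multipliers in question extend by continuity to the completion). Consequently $Z(X^{(\infty})$ is infinite-dimensional if and only if $\sup_n\dim Z(X^{(2n})=\infty$, so it is enough to prove that this condition forces $X$ to have the diameter $2$ property.

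Next I would extract $M$-structure. For each $n\ge 1$ the space $X^{(2n}$ is a dual space, and, by a standard fact in the theory of $M$-ideals, the centralizer of a dual space is the closed linear span of its idempotents; hence the same holds for $Z(X^{(\infty})$. An infinite-dimensional Banach algebra that is the closed span of its idempotents contains infinitely many pairwise orthogonal nonzero idempotents, and each idempotent of $Z(X^{(\infty})$ is the projection onto an $M$-summand of $X^{(\infty}$, with orthogonal idempotents yielding pairwise ``$\ell_\infty$-orthogonal'' $M$-summands. Thus $X^{(\infty}$ contains a $1$-complemented $\ell_\infty$- (or $c_0$-) sum $(\bigoplus_m E_m)$ of infinitely many nonzero Banach spaces, and in fact splits such a sum off. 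Any such space has the diameter $2$ property: a finite family of functionals on $(\bigoplus_m E_m)$ is, up to an arbitrarily small error, concentrated on finitely many coordinates, so given any element $w$ of the unit ball and any coordinate $k$ not ``seen'' by those functionals, replacing the $k$-th component of $w$ by $\pm u$ with $u\in S_{E_k}$ keeps $w$ inside the prescribed non-empty relatively weakly open set (or finite convex combination of slices) and produces two points at distance $2$; for $\ell_\infty$-sums of diameter $2$ spaces one may instead invoke the $\ell_p$-sum results of the paper. Hence $X^{(\infty}$ has the diameter $2$ property.

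It remains to descend to $X$, and this is the main obstacle. One cannot argue by weak density: $B_X$ is convex and norm-closed in each even dual, hence weakly closed there (Mazur) and with empty weak interior, so a non-empty relatively weakly open subset of $B_{X^{(\infty}}$ need not meet $B_X$. Instead, fix a slice $S(\xs,\varepsilon)$ of $B_X$ with $\xs\in S_{\Xs}$ and view it as the trace on $B_X$ of the slice $S(\xs,\varepsilon)$ of $B_{X^{(\infty}}$, using that $\xs$ extends canonically and isometrically to $(X^{(\infty})^\ast$ (odd duals act on all higher even duals). By the previous steps there are $w^1,w^2$ in that slice with $\|w^1-w^2\|>2-\delta$, separated by some $\psi\in S_{(X^{(\infty})^\ast}$ that, by the coordinate argument, may be taken ``supported'' in a single summand $E_m$ not seen by $\xs$. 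One then approximates $w^1,w^2$ in norm by elements of $B_{X^{(2N}}$ for a large fixed $N$ and transports them down the finite tower $X^{(2N}\supseteq X^{(2N-2}\supseteq\cdots\supseteq X$ one bidual step at a time, using Goldstine's theorem and the principle of local reflexivity, keeping the value of $\xs$ (which sits at the bottom of the tower) controlled so as to stay in the slice and using the special form of $\psi$ to keep the two images far apart, landing two points of $S(\xs,\varepsilon)$ at distance greater than $2-2\delta$.

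Making precise that \emph{the separating functional survives the passages down the tower} is the delicate step — morally the analogue here of the ``every subspace of $\Yss$ containing $Y$'' phenomenon in Theorem~\ref{teo3}, and I expect it to be the technical heart of the proof. Once it is in place, $S(\xs,\varepsilon)$ has diameter $2$, and running the same argument from an arbitrary non-empty relatively weakly open subset of $B_X$ (treated identically on the $X^{(\infty}$ side, or reduced to slices via Bourgain's lemma) gives the full diameter $2$ property for $X$.
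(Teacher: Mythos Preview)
The paper does not prove this theorem; it is quoted from \cite{BGRP} as part of the survey in Section~2, so there is no argument in the present paper to compare your proposal against.

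As for the proposal itself, the difficulty is misallocated. You flag the descent from $X^{(\infty}$ to $X$ as ``the main obstacle'' and ``the technical heart,'' and you attack it by chasing two specific points and a separating functional down the tower via Goldstine and local reflexivity. But the descent step---once you are in an even dual rather than in $X^{(\infty}$---is essentially free, and the present paper even records this at the end of Section~\ref{questsection}: a non-empty relatively weakly open subset $U$ of $B_X$, being determined by finitely many functionals in $\Xs$, is the trace on $B_X$ of a relatively weak$^\ast$ open (hence relatively weakly open) subset $\tilde U$ of $B_{\Xss}$; by Goldstine $U$ is weak$^\ast$ dense in $\tilde U$, and weak$^\ast$ lower semicontinuity of the norm then gives $\mathrm{diam}\,U=\mathrm{diam}\,\tilde U$. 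Iterating, the diameter~2 property descends from any $X^{(2N}$ to $X$ with no loss. The right strategy, and the one used in \cite{BGRP}, is therefore to exploit the hypothesis $\sup_n\dim Z(X^{(2n})=\infty$ to locate the needed $M$-structure inside some fixed $X^{(2N}$ and to prove the diameter~2 property there; the detour through $X^{(\infty}$ manufactures the very obstacle you are struggling with, since $X^{(\infty}$ is not a bidual of anything in the tower and diameter~2 does not pass to arbitrary subspaces.

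There is also an unjustified step in your reduction: the claim that $Z(X^{(\infty})$ is the closed linear span of $\bigcup_n Z(X^{(2n})$. A multiplier on the completion $X^{(\infty}$ need not restrict to a multiplier on each $X^{(2n}$, nor is it clear that it is approximable by such, so ``one checks'' does not suffice; the two hypotheses in the statement are in fact handled as separate (related) results in \cite{BGRP} rather than via this equivalence.
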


Theorem~\ref{teo4} includes Theorem~\ref{teo2} and a lot of other
cases (see \cite[Proposition 3.3]{ABG}). As Theorem~\ref{teo4}
indicates, $Z(X^{(\infty})$ being infinite-dimensional more than
suffices to assure that $X$ has the diameter 2 property. In \cite{ABG}
this becomes very clear; we state the main result:

\begin{thm}\label{teo5}
  If $\dim Z(X^{(\infty})=\infty$, then the
  completed $n$-fold symmetric projective tensor product of $X$ has
  the diameter 2 property.
\end{thm}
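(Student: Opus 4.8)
The plan is to derive this from Theorem~\ref{teo4}. Write $Y=\widehat{\otimes}_{n,s,\pi}X$ for the completed $n$-fold symmetric projective tensor product; it then suffices to show that the centralizer $Z(Y^{(\infty})$ is infinite-dimensional. The engine of the argument is a \emph{lifting of multipliers} from a Banach space $W$ to $\widehat{\otimes}_{n,s,\pi}W$. Given a multiplier $T$ on $W$, consider the symmetrised elementary operator
\begin{equation*}
  \Phi_W(T)=\frac1n\sum_{k=1}^{n}I\otimes\cdots\otimes I\otimes T\otimes I\otimes\cdots\otimes I ,
\end{equation*}
where in the $k$th summand $T$ occupies the $k$th slot. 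Since the sum is invariant under conjugation by the symmetric group $S_n$, the operator $\Phi_W(T)$ leaves the symmetric tensors invariant and extends to a bounded operator on $\widehat{\otimes}_{n,s,\pi}W$ of norm at most $\|T\|$; moreover $T\mapsto\Phi_W(T)$ is linear, and it is injective, because pairing $\Phi_W(T)(x^{\otimes n})$ with $\varphi^{\otimes n}$, for $\varphi\in W^\ast$, yields $\varphi(x)^{n-1}\varphi(Tx)$, which vanishes for all $x$ and $\varphi$ only when $T=0$.

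The decisive step is to verify that $\Phi_W(T)$ is in fact a \emph{multiplier} on $\widehat{\otimes}_{n,s,\pi}W$, i.e.\ that every extreme point of the dual ball is an eigenvector of $\Phi_W(T)^\ast$. Here one uses the identification $\big(\widehat{\otimes}_{n,s,\pi}W\big)^\ast=\mathcal{P}(^{n}W)$, the space of $n$-homogeneous polynomials on $W$, together with a description of the extreme points of its unit ball: for the spaces with rich centralizer these are, up to unimodular multiples, exactly the polynomials $P_\varphi\colon x\mapsto\varphi(x)^{n}$ with $\varphi$ an extreme point of $B_{W^\ast}$. On such a $P_\varphi$ one computes, using density of the elementary tensors, that $\Phi_W(T)^\ast P_\varphi=a_T(\varphi)\,P_\varphi$, so that $\Phi_W(T)\in Z(\widehat{\otimes}_{n,s,\pi}W)$ with symbol $P_\varphi\mapsto a_T(\varphi)$. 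In particular $\Phi_W$ embeds $Z(W)$ linearly, hence dimension-preservingly, into $Z(\widehat{\otimes}_{n,s,\pi}W)$.

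It then remains to run this through the even-dual tower. A multiplier extends to the bidual through its bitranspose, again as a multiplier, so $Z(X)\hookrightarrow Z(\Xss)\hookrightarrow Z(X^{(4})\hookrightarrow\cdots$ is an increasing chain whose union is dense in $Z(X^{(\infty})$, and by hypothesis this union is infinite-dimensional. Applying $\Phi$ at each level and tracking the canonical extension maps that relate $\widehat{\otimes}_{n,s,\pi}X^{(2m}$ to the even duals $Y^{(2m}$ of $Y$, one obtains that $Z(Y^{(\infty})$ is infinite-dimensional, and Theorem~\ref{teo4} then gives the diameter 2 property for $Y$.

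I expect the main obstacle to be precisely the identification of the extreme points of $B_{\mathcal{P}(^{n}W)}$ and the verification that the symmetrised elementary operators $\Phi_W(T)$ act diagonally on them; once the multiplier property of $\Phi_W(T)$ is secured, the compatibility with biduals and the final appeal to Theorem~\ref{teo4} should be essentially bookkeeping.
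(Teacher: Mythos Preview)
The paper does not prove this theorem at all: Section~2 is explicitly a survey, and Theorem~\ref{teo5} is quoted from \cite{ABG} with the words ``we state the main result''. So there is no ``paper's own proof'' to compare your proposal to; the original argument lives in \cite{ABG}.

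That said, your plan has two genuine gaps worth flagging. First, the claimed description of the extreme points of $B_{\mathcal{P}(^{n}W)}$ as exactly the powers $P_\varphi(x)=\varphi(x)^n$ with $\varphi$ extreme in $B_{W^\ast}$ is not true in general, and the hedge ``for the spaces with rich centralizer'' is not a known theorem. Without this, the verification that $\Phi_W(T)$ is a multiplier on $\widehat{\otimes}_{n,s,\pi}W$ breaks down: being a multiplier requires \emph{every} extreme functional to be an eigenvector of the adjoint, so knowing the action only on the special functionals $P_\varphi$ is insufficient.

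Second, the passage through the even-dual tower is more delicate than ``bookkeeping''. The spaces $\widehat{\otimes}_{n,s,\pi}X^{(2m}$ and $Y^{(2m}=(\widehat{\otimes}_{n,s,\pi}X)^{(2m}$ are generally not the same; the Aron--Berner type extension gives a map between them, but it is typically neither onto nor an isometry, and you would need to show that your lifted multipliers on the former actually yield multipliers on the latter. As it stands, the argument produces elements of $Z(\widehat{\otimes}_{n,s,\pi}X^{(2m})$, not of $Z(Y^{(2m})$, and these are different objects. The route taken in \cite{ABG} avoids both obstacles by arguing more directly with the geometry of the tensor product rather than by transporting centralizer elements.
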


The results involving the centralizer are strong and powerful, but
they do not contain the $L_1$-cases. We have now stated the most
fundamental theorems of sufficiency for diameter 2 properties. Let us
see how diameter 2 properties are transferred to some basic
structures:

\begin{thm}\label{teo6}
  Let $X$ and $Y$ be Banach spaces. Then the following hold.
  \begin{itemize}
  \item [(i)] If $X$ or $Y$ has the local diameter 2 property, then
    the projective tensor product $X \widehat{\otimes}_\pi Y$ has the
    local diameter 2 property.
  \item [(ii)] If $X$ or $Y$ has the diameter 2 property, then
    $X\oplus_\infty Y$ has the diameter 2 property.
  \item [(iii)] If $X$ and $Y$ both have the strong diameter 2
    property, then $X\oplus_1 Y$ has the strong diameter 2 property.
  \end{itemize}
\end{thm}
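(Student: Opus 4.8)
The plan is to handle the three parts separately, in each case reducing to the definition of the relevant diameter 2 property and exploiting the structure of the particular sum or tensor product. For part (i), I would use Bourgain's lemma implicitly: since we only need the \emph{local} diameter 2 property, it suffices to show that every slice of $B_{X\widehat\otimes_\pi Y}$ has diameter 2. Suppose $X$ has the local diameter 2 property, and let a slice $S(\Phi,\varepsilon)$ be given, where $\Phi\in S_{(X\widehat\otimes_\pi Y)^\ast}=S_{\mathcal L(X,\Ys)}$. Pick an elementary tensor $x_0\otimes y_0$ with $\|x_0\|=\|y_0\|=1$ lying in the slice (possible since elementary tensors of norm one are weak$^\ast$-dense enough in $B_{X\widehat\otimes_\pi Y}$ to realize any slice up to shrinking $\varepsilon$), so that $\langle\Phi,x_0\otimes y_0\rangle=\Phi(x_0)(y_0)>1-\varepsilon$. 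Now apply the local diameter 2 property of $X$ to the slice $\{x\in B_X:\Phi(x)(y_0)>1-\varepsilon\}$ of $B_X$, which contains $x_0$: it has diameter $2$, so it contains points $x_1,x_2$ with $\|x_1-x_2\|$ close to $2$. Then $x_1\otimes y_0$ and $x_2\otimes y_0$ both lie in $S(\Phi,\varepsilon)$, and $\|x_1\otimes y_0-x_2\otimes y_0\|_\pi=\|x_1-x_2\|$ (the projective norm of $(x_1-x_2)\otimes y_0$ is $\|x_1-x_2\|\,\|y_0\|$), so the slice has diameter $2$.

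For part (ii), assume without loss of generality that $X$ has the diameter 2 property, and let $W$ be a non-empty relatively weakly open subset of $B_{X\oplus_\infty Y}$. Shrinking $W$, we may assume it has the form $\{(x,y)\in B_{X\oplus_\infty Y}:|x_i^\ast(x)+y_i^\ast(y)-c_i|<\delta,\ i=1,\dots,n\}$ for some functionals and scalars. Fix a point $(x_0,y_0)\in W$; since $W$ meets the unit sphere we may take $\max(\|x_0\|,\|y_0\|)$ close to $1$, and by adjusting slightly I would arrange $\|y_0\|<1$, so that the $Y$-coordinate has a little room. Consider the relatively weakly open set $U=\{x\in B_X:|x_i^\ast(x)-(c_i-y_i^\ast(y_0))|<\delta',\ i\}$ in $B_X$, which contains $x_0$ and is non-empty; by the diameter 2 property of $X$ it has diameter $2$, so pick $x_1,x_2\in U$ with $\|x_1-x_2\|$ near $2$. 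Then $(x_1,y_0)$ and $(x_2,y_0)$ lie in $W$ (the $\infty$-norm of each is $\max(\|x_j\|,\|y_0\|)\le 1$ since $\|y_0\|<1$ and $\|x_j\|\le 1$), and their distance in the $\infty$-sum is $\|x_1-x_2\|$, which is near $2$. Hence $\operatorname{diam}W=2$.

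For part (iii), assume both $X$ and $Y$ have the strong diameter 2 property. Let $C=\sum_{k=1}^m\mu_k S(\Psi_k,\varepsilon_k)$ be a finite convex combination of slices of $B_{X\oplus_1 Y}$, where each $\Psi_k=(x_k^\ast,y_k^\ast)$ with $\max(\|x_k^\ast\|,\|y_k^\ast\|)=1$. The key observation is that a slice $S(\Psi_k,\varepsilon_k)$ of the $1$-sum ball essentially splits: a point $(x,y)$ with $\|x\|+\|y\|\le 1$ satisfies $x_k^\ast(x)+y_k^\ast(y)>1-\varepsilon_k$, and writing $\|x\|=t_k$, $\|y\|=1-t_k$ forces $x/\|x\|$ to be deep in a slice of $B_X$ and $y/\|y\|$ deep in a slice of $B_Y$ (up to the splitting parameter $t_k$). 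One then runs the strong diameter 2 property of $X$ on the convex combination $\sum_k\mu_k t_k S(x_k^\ast,\delta)$ of slices of $B_X$ and of $Y$ on $\sum_k\mu_k(1-t_k)S(y_k^\ast,\delta)$ of $B_Y$, to produce, for each, two points at distance near $2$; recombining with weights $t_k$ and $1-t_k$ gives two points of $C$ whose $\oplus_1$ distance is near $\sum_k\mu_k t_k\cdot 2+\sum_k\mu_k(1-t_k)\cdot 2=2$.

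I expect part (iii) to be the main obstacle. The difficulty is bookkeeping: one must choose the splitting parameters $t_k\in[0,1]$ (how the unit mass in each slice is distributed between the $X$- and $Y$-coordinates) consistently, handle the degenerate cases $t_k\in\{0,1\}$ where one coordinate contributes nothing, and verify that the two recombined points genuinely land in $C$ (not merely in a dilate of $C$) while keeping their distance near $2$. Parts (i) and (ii) are comparatively routine once one reduces to a slice, respectively a relatively weakly open set, in one factor; the only care needed there is the standard fact that a relatively weakly open subset of the unit ball of an infinite-dimensional space meets the sphere, which lets us keep norms near $1$.
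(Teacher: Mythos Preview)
Your proposal is correct and follows the same ideas the paper invokes: the paper does not give a self-contained proof but simply points to the dual identification $(X\widehat\otimes_\pi Y)^\ast=\mathcal L(X,\Ys)$ for (i), to \cite[Lemma~2.1]{LP} for (ii), and to the proof of \cite[Lemma~2.1(ii)]{BGLP} for (iii), and your sketches are precisely the natural arguments behind those references. The only cosmetic remarks are that in (ii) the adjustment $\|y_0\|<1$ is unnecessary (any $y_0\in B_Y$ works since you only need $(x_j,y_0)\in B_X\times B_Y$), and in (i) what you really use is that $\|\Phi\|$ is approximated on elementary tensors, not any density statement.
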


Here (i) follows by using that
$(X \widehat{\otimes}_\pi Y)^\ast
=\mathcal{L}(X,\Ys)=\mathcal{L}(Y,\Xs)$.
(ii) is Lemma 2.1 of \cite{LP}. The
proof of (iii) is the proof of Lemma 2.1 (ii) in \cite{BGLP}. 

\begin{rem}
  Note that the statement and proof of Lemma 2.1 (ii) in \cite{BGLP}
  do not match. What they actually prove is statement (iii) in Theorem
  \ref{teo6} above. We will show in Theorem~\ref{thm:stable-ell_p}
  that their statement is also true.
\end{rem}

We will end this section by mentioning that the interpolation spaces
$L_1(\mathbb {R}^{+})+L_\infty(\mathbb {R}^{+})$ (endowed with their
two natural norms) and $L_1(\mathbb {R}^{+}) \cap L_\infty(\mathbb
{R}^{+})$ (endowed with the maximum norm) all have the diameter $2$
property, but they do not have the Daugavet property \cite{AK}.

\section{Some examples not covered by the known classes of diameter 2 spaces}

In \cite{LP} it is proved that if a space has the diameter $2$
property then also the $\ell_\infty$-sum with any other space has this
property. Also, as pointed out in Theorem \ref{teo6}, the authors of
\cite{BGLP} proved that the $\ell_1$-sum of two spaces with the strong
diameter 2 property inherits this property. We will now prove that the
(local) diameter 2 property is in fact stable by taking $\ell_p$-sums
for every $1 \le p \le \infty$. First we need a lemma.

\begin{lem}\label{lem:diam2-scale}
  Suppose every non-empty relatively weakly open subset (resp. every
  slice, every finite convex combination of slices) of the unit ball
  $B_X$ of a Banach space $X$ has diameter $2$. Then every non-empty
  relatively weakly open subset (resp. every slice, every finite
  convex combination of slices) of $\delta B_X$ has diameter $2\delta$,
  where $1 \ge \delta > 0$.    
\end{lem}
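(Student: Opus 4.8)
The plan is to exploit the obvious scaling bijection $x \mapsto \delta x$ between $B_X$ and $\delta B_X$, checking that it respects the three relevant classes of sets. First I would treat slices. Given a slice $T$ of $\delta B_X$, say $T = \{x \in \delta B_X : \xs(x) > \delta - \eta\}$ with $\xs \in S_{\Xs}$ and $\eta > 0$, observe that $x \in T$ if and only if $\delta^{-1}x \in B_X$ and $\xs(\delta^{-1}x) > 1 - \eta/\delta$, i.e. $T = \delta\, S(\xs, \eta/\delta)$. Conversely every set of the form $\delta\, S(\xs,\varepsilon)$ with $\varepsilon \le 1$ is a slice of $\delta B_X$ (and if $\varepsilon > 1$ the slice is all of $\delta B_X$, whose diameter is $2\delta$ anyway, using $\dim X = \infty$ — but in fact we never need that degenerate case). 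Since multiplication by $\delta$ scales distances by exactly $\delta$, we get $\operatorname{diam} T = \delta \operatorname{diam} S(\xs,\eta/\delta) = \delta \cdot 2 = 2\delta$ by hypothesis on $B_X$.

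Next, the map $x \mapsto \delta x$ is an affine homeomorphism of $X$ onto itself which carries $B_X$ onto $\delta B_X$; being affine it commutes with forming convex combinations, and being a linear homeomorphism it is in particular weak-to-weak continuous with weakly continuous inverse. Hence it maps finite convex combinations of slices of $B_X$ precisely onto finite convex combinations of slices of $\delta B_X$ (using the description of slices from the previous paragraph), and it maps relatively weakly open subsets of $B_X$ precisely onto relatively weakly open subsets of $\delta B_X$. In each case the hypothesis gives that the source set has diameter $2$, and the scaling identity $\operatorname{diam}(\delta A) = \delta \operatorname{diam}(A)$ then yields diameter $2\delta$ for the image. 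Running the argument in the opposite direction (dividing by $\delta$) shows the correspondence is onto, so every set of the required type in $\delta B_X$ arises this way.

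I do not expect a genuine obstacle here: the only point requiring a moment's care is the bookkeeping with the $\varepsilon$-parameters of slices, namely that shrinking the ball by $\delta$ rescales the defining threshold from $1-\varepsilon$ to $\delta(1-\varepsilon) = \delta - \delta\varepsilon$, so that a slice of $\delta B_X$ with parameter $\eta$ corresponds to a slice of $B_X$ with parameter $\eta/\delta$ (and one should note a slice of $\delta B_X$ with parameter $\eta \ge \delta$ is the whole of $\delta B_X$). Everything else is the formal observation that homothety with ratio $\delta$ multiplies all diameters by $\delta$ and preserves the weak topology, convex structure, and the three classes of subsets simultaneously.
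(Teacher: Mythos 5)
Your proposal is correct and rests on exactly the same idea as the paper's proof: the homothety $x\mapsto\delta x$ carries slices, relatively weakly open sets, and convex combinations of slices of $B_X$ onto the corresponding sets in $\delta B_X$ while scaling all distances by $\delta$. The paper merely writes this out pointwise (choosing $x_1,x_2$ in the rescaled set with $\|x_1-x_2\|>2-\varepsilon$ and pushing them forward), so the two arguments differ only in presentation.
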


\begin{proof}
  We first prove the result for non-empty relatively weakly open
  subsets. To this end, let $\varepsilon > 0$, $W \subset X$ be weakly open,
  and suppose $W \cap \delta B_X \not=\emptyset$. Pick some $x_0 \in
  B_X$ such that $\delta x_0 \in W \cap \delta B_X$ and some weak
  neighborhood $W_0=\{x \in X: |\xs_i(x-\delta x_0)| < \varepsilon, 1 \le i
  \le n\}$ of $\delta x_0$ with $W_0 \subset W$. Using the assumption
  there are $x_1,x_2 \in W_0'=\{x \in X:|\xs_i(x-x_0)| <
  \delta^{-1}\varepsilon, 1 \le i \le n\} \cap B_X$ such that $\|x_1-x_2\| >
  2-\varepsilon$. Now $\delta x_1, \delta x_2$ are in $W_0 \cap \delta B_X$
  and $\|\delta x_1-\delta x_2\| > (2-\varepsilon)\delta$. Since $\varepsilon$ was
  arbitrary we are done.

  The same argument works also for slices. 
  
  We now prove the result for finite convex combinations of slices. So
  let $\varepsilon > 0$, $S=\sum_{i=1}^n \lambda_i S_i$ a convex combination of
  slices of $\delta B_X$, and write $S_i=\{x \in \delta B_X: \xs_i(x) >
  \delta - \varepsilon_i\}$ with $\xs_i \in S_{\Xs}$. Let $S_i'=\{x \in B_X:
  \xs_i(x) > 1 - \delta^{-1}\varepsilon_i\}$. By assumption there are $x_1,x_2
  \in \sum_{i=1}^n \lambda_i S_i'$ with $\|x_1-x_2\| > 2-\varepsilon$. It is
  easy to see that $\delta x_1$ and $\delta x_2$ are in $S$ and
  $\|\delta x_1-\delta x_2\| > (2-\varepsilon)\delta$. Since $\varepsilon$ was
  arbitrary we are done.   
\end{proof}

\begin{thm}\label{thm:stable-ell_p}
  The (local) diameter 2 property is stable by taking $\ell_p$-sums
  for all $1 \le p \le \infty$.
\end{thm}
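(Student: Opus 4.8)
The plan is to argue coordinatewise, applying the scaling Lemma~\ref{lem:diam2-scale} inside each summand. Write $X=\bigl(\bigoplus_{i\in I}X_i\bigr)_{\ell_p}$, so that $X^\ast=\bigl(\bigoplus_{i\in I}X_i^\ast\bigr)_{\ell_q}$ with $1/p+1/q=1$, and first assume every $X_i$ has the local diameter $2$ property. Let $S=S(x^\ast,\varepsilon)$ be a slice of $B_X$, with $x^\ast=(x_i^\ast)_i$, fix a small $\eta\in(0,\varepsilon)$, and pick $x=(x_i)_i\in S$ with $x^\ast(x)>1-\eta$; since $\|x^\ast\|=1$ this already forces $\|x\|_p>1-\eta$. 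Put $\delta_i=\|x_i\|$. I will replace each $x_i$ by two nearby points $u_i^{(1)},u_i^{(2)}$ of norm $\le\delta_i$ that almost realize the diameter $2\delta_i$ of $\delta_iB_{X_i}$, while moving the functional $x_i^\ast$ by only a small, summable amount $\mu_i$. Concretely, fix $\mu_i>0$ and put $S_i=\{w\in\delta_iB_{X_i}:x_i^\ast(w)>x_i^\ast(x_i)-\mu_i\}$; this is a slice of $\delta_iB_{X_i}$ containing $x_i$, so by Lemma~\ref{lem:diam2-scale} (applied to the local diameter $2$ property of $X_i$) it has diameter $2\delta_i$, and one can choose $u_i^{(1)},u_i^{(2)}\in S_i$ with $\|u_i^{(1)}-u_i^{(2)}\|>2\delta_i(1-\sigma)$ for a small parameter $\sigma\in(0,1)$.

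Setting $u^{(j)}=(u_i^{(j)})_i$ for $j=1,2$, one checks three things. Since every coordinate has norm $\le\delta_i$, $\|u^{(j)}\|_p\le\|x\|_p\le1$, so $u^{(j)}\in B_X$. Since $x_i^\ast(u_i^{(j)})>x_i^\ast(x_i)-\mu_i$ in each coordinate, $x^\ast(u^{(j)})>x^\ast(x)-\sum_i\mu_i>1-\eta-\sum_i\mu_i$, so $u^{(1)},u^{(2)}\in S$ as soon as $\eta+\sum_i\mu_i<\varepsilon$. Finally
\[
\|u^{(1)}-u^{(2)}\|_p^p=\sum_i\|u_i^{(1)}-u_i^{(2)}\|^p\ge(2(1-\sigma))^p\sum_i\delta_i^p>(2(1-\sigma)(1-\eta))^p
\]
(when $p=\infty$, $\|u^{(1)}-u^{(2)}\|_\infty>2(1-\sigma)(1-\eta)$ similarly). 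Since $\sigma$ and $\eta$ may be chosen as small as we like, $\operatorname{diam}S=2$, and $X$ has the local diameter $2$ property.

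The one point that needs care --- and the real obstacle --- is that infinitely many coordinates are being perturbed at once, so the widths $\mu_i$ must be chosen summable with $\sum_i\mu_i<\varepsilon-\eta$. For $1\le p<\infty$ this works with $\mu_i=\mu\,\delta_i^p$, since then $\sum_i\mu_i=\mu\|x\|_p^p\le\mu$. For $p=\infty$ one takes instead $\mu_i=\mu\,\|x_i^\ast\|$, leaving the coordinates with $x_i^\ast=0$ unchanged, since $\sum_i\mu_i=\mu\|x^\ast\|_1=\mu$, and one uses that $\|x\|_\infty=\sup_i\delta_i>1-\eta$ is attained --- up to any slack --- at a coordinate with $x_i^\ast\neq0$. (For $p<\infty$ one may equally well first truncate: fix a finite $F\subseteq I$ with $\sum_{i\notin F}\delta_i^p$ negligible, keep the coordinates outside $F$ fixed, and use one small $\mu$ on the finitely many coordinates in $F$.) Everything else is routine once Lemma~\ref{lem:diam2-scale} supplies the scaled diameter $2$ property in each summand.

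For the (non-local) diameter $2$ property the same proof works with ``slice'' replaced by ``relatively weakly open set'' throughout. Given a non-empty relatively weakly open $W\cap B_X$, pick $x\in W\cap B_X$ with $\|x\|_p=1$ (possible since $X$ is infinite-dimensional, so such a set meets $S_X$), write $W=\{y\in X:|x^{\ast(k)}(y-x)|<\gamma,\ 1\le k\le m\}$, and in coordinate $i$ use in place of $S_i$ the set $\{w\in\delta_iB_{X_i}:|x_i^{\ast(k)}(w-x_i)|<\gamma_i,\ 1\le k\le m\}$, which is a relatively weakly open subset of $\delta_iB_{X_i}$ containing $x_i$ and hence, by the weakly open case of Lemma~\ref{lem:diam2-scale}, has diameter $2\delta_i$. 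Choosing $\gamma_i$ summable with $\sum_i\gamma_i<\gamma$ --- using $\sum_i\delta_i^p<\infty$ for $p<\infty$, resp.\ $\sum_i\sum_k\|x_i^{\ast(k)}\|<\infty$ for $p=\infty$ --- keeps $u^{(1)},u^{(2)}$ inside $W$, and the norm and distance estimates carry over verbatim. (For $p=\infty$ and the diameter $2$ property one may alternatively split off a single summand and invoke Theorem~\ref{teo6}(ii).)
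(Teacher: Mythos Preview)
Your proof is correct and follows essentially the same approach as the paper: apply Lemma~\ref{lem:diam2-scale} coordinatewise in the scaled balls $\delta_i B_{X_i}$ and reassemble. The paper only writes out the case of two summands (splitting the $\varepsilon$ as $\varepsilon/2+\varepsilon/2$) and asserts that ``the general case is similar,'' whereas you treat an arbitrary index set directly and work out the summability of the coordinatewise perturbations~$\mu_i$ that this requires.
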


\begin{proof}
  We will only prove it for the diameter 2 property and for the case
  when we sum two spaces. The general case is similar. Also the proof
  for the local diameter 2 property is similar.

  To begin with note that the case $p=\infty$ is Theorem \ref{teo6}
  (ii). So let $1 \le p < \infty$, $\varepsilon > 0$, $X_1$ and $X_2$ Banach
  spaces with the diameter 2 property, and put $Z=X_1 \oplus_p
  X_2$. Let $W$ be a non-empty relatively weakly open subset of
  $B_Z$. Since $Z$ is infinite-dimensional every weakly open set is
  unbounded in some direction. Thus there is some $z_0 = (x_1,x_2) \in W
  \cap S_Z$. Now find some relatively weakly open neighborhood $W_0=\{z
  \in B_Z: |\zs_i(z-z_0)| < \varepsilon, i=1,2,\ldots,n \}$ of $z_0$ where
  $\zs_i = (\xs_{1,i},\xs_{2,i}) \in S_{\Zs}$ such that $W_0 \subset
  W$. Put 
  \begin{align*}
   W_0^1 &= \{x \in \|x_1\|B_{X_1}: |\xs_{1,i}(x-x_1)| < \varepsilon/2,
   i=1,2,\ldots,n\},\\ 
   W_0^2 &= \{x \in \|x_2\|B_{X_2}: |\xs_{2,i}(x-x_2)| < \varepsilon/2, i=1,2,\ldots,n\}
 \end{align*}

 Since, for $k=1,2$, $W_0^k$ is non-empty relatively weakly open in
 $\|x_k\|B_{X_k}$, we can by Lemma \ref{lem:diam2-scale} find points
 $x_k^1$ and $x_k^2$ in $W_0^k$ with $\|x_k^1-x_k^2\| >
 (2-\varepsilon)\|x_k\|$. Moreover, $W_0^\times = W_0^1 \times W_0^2 \subset
 W_0$. Indeed, suppose $y=(y_1,y_2) \in  W_0^\times$. Then
 \begin{equation*}
   \|y\|^p = \|y_1\|^p + \|y_2\|^p \le \|x_1\|^p + \|x_2\|^p = 1,
 \end{equation*}
 so $y \in B_Z$. Also for every $i=1,2,\ldots,n$ we have 
 \begin{align*}
   |\zs_i(y-z_0)|=|\xs_{1,i}(y_1-x_1) + \xs_{2,i}(y_2-x_2)| < \varepsilon,
 \end{align*}
 so $y \in W_0^\times$. Now put $x^1 = (x_1^1,x_2^1)$ and $x^2 =
 (x_1^2,x_2^2)$. Then we get
 \begin{align*}
   \|x^1-x^2\|^p &= \|x_1^1-x_1^2\|^p + \|x_2^1-x_2^2\|^p\\
   &\ge (2-\varepsilon)^p(\|x_1\|^p + \|x_2\|^p)\\
   &=(2-\varepsilon)^p,  
 \end{align*}
 and since $\varepsilon$ is arbitrary, we are done.    
\end{proof}

Using Theorem \ref{thm:stable-ell_p} it is easy to produce an example
which falls outside the theorems in Section 2.

\begin{eks}\label{c_nulleks}
  The space $X=c_0\oplus_2 c_0$ is a strict
  u-ideal in its bidual and has the diameter 2 property. However,
  $\Xs$ is not an $L$-summand in its bidual, $\sup_n\dim
  Z(X^{(2n})=1$, $X$ is not a uniform algebra,
  and $X$ lacks the Daugavet property. 
\end{eks}

\begin{proof} 
  $X$ is a strict u-ideal in its bidual.
  This can be shown as in \cite[Example~(5)]{GKS} or it can be shown
  directly using \cite[Lemma~2.2]{GKS}.

  Since every dual of $X$ contains an $\ell_2$-sum, $\Xs$ is in
  particular not an L-summand in its bidual and none of the duals of $X$
  can contain any non-trivial $M$-ideal (\cite[p.~45]{HWW}). Thus
  $Z(X^{(2n})=1$ for all $n$ (\cite[p.~39]{HWW}), hence $\sup_n \dim
  Z(X^{(2n})=1$. $X$ does not have the Daugavet property since no
  separable space with the Daugavet property can have an unconditional
  basis (\cite[Corollary~2.7]{KSSW}). Further, $X$ is definitely not a
  uniform algebra. Thus $X$ is not contained in any of the main cases
  covered in Section 2.
\end{proof}

Theorem \ref{teo2} tells us that proper $M$-embedded spaces have the
diameter 2 property.
Strict u-ideals in their biduals share many of the
properties of $M$-embedded spaces (see \cite{GKS}, \cite{LL},
\cite{AN1}, and \cite{HWW}). However the next example shows that
spaces which are strict u-ideals in their biduals need not even have
the local diameter 2 property.  

\begin{eks}
  $X=\mathbb{R} \oplus_1 c_0$ is a strict u-ideal in $\Xss$ which
  does not have the local diameter 2 property.
\end{eks}

\begin{proof}
  First, $X$ is a strict u-ideal by \cite[Example~(6)]{GKS}.
  To finish, let $\phi=(1,(0)) \in \Xs$ and
  $S=\{x \in B_X :\phi(x)>1-\varepsilon\}$.
  Then $S$ has diameter less than $2\varepsilon$.
\end{proof}

\section{Spaces with the strong diameter 2 property}\label{strongsection}
The object of this section is to prove that infinite-dimensional
uniform algebras, spaces
with the Daugavet property, proper $M$-embedded spaces, and biduals
of proper $M$-embedded spaces have
the strong diameter 2 property. We start by proving this for uniform
algebras. Let us first note the following simple, sufficient condition
for a space to have the strong diameter 2 property.
It will also be used in the proof of Theorem~\ref{thm-memb-strong}.

\begin{lem}\label{strongsufficient} Suppose $X$ has the local diameter
  2 property in the following sense: Whenever $\{S_{j}\}_{j=1}^n$ is
  a finite family of slices of $B_X$ and $\varepsilon>0$, then there exist
  points $h_j\in S_{j}$ and $\varphi\in B_X$, independent of $j$, such
  that $h_j\pm\varphi\in S_{j}$ and $\|\varphi\|
  >1-\varepsilon$. Then $X$ has the strong diameter 2 property.
\end{lem}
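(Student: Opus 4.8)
The goal is to prove that the stated hypothesis implies the strong diameter~2 property, i.e., that every finite convex combination $S=\sum_{i=1}^n\lambda_i S(\xs_i,\varepsilon_i)$ of slices of $B_X$ has diameter~$2$. I would fix such a convex combination and a small $\varepsilon>0$ and aim to produce two points of $S$ at distance arbitrarily close to $2$.

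The plan is to apply the hypothesis to the finite family $\{S_i\}_{i=1}^n$ of slices occurring in $S$ (with the given $\varepsilon$). This yields points $h_i\in S_i$ together with a single $\varphi\in B_X$ with $\|\varphi\|>1-\varepsilon$ such that $h_i\pm\varphi\in S_i$ for every $i$. Since each $S_i$ is convex and $\sum\lambda_i=1$, the two elements $z^\pm:=\sum_{i=1}^n\lambda_i(h_i\pm\varphi)$ both lie in $\sum_{i=1}^n\lambda_i S_i=S$. Then $z^+-z^-=\sum_i\lambda_i\bigl((h_i+\varphi)-(h_i-\varphi)\bigr)=2\varphi$, so $\|z^+-z^-\|=2\|\varphi\|>2(1-\varepsilon)=2-2\varepsilon$. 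Hence $\operatorname{diam}S\ge 2-2\varepsilon$, and since $\varepsilon>0$ was arbitrary and $\operatorname{diam}S\le 2$ always, we get $\operatorname{diam}S=2$.

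There is essentially no obstacle here: the content of the lemma is entirely in the hypothesis, which has been engineered precisely so that the common perturbation $\varphi$ survives averaging against the weights $\lambda_i$. The only points worth a sentence are that $h_i\pm\varphi\in S_i\subset B_X$ so the convex combinations land in $B_X$, and that the perturbation being \emph{independent of $j$} is exactly what makes the differences telescope to $2\varphi$ rather than to an uncontrolled sum $\sum_i\lambda_i(\varphi_i^+-\varphi_i^-)$. I would write the proof in three or four lines along exactly these lines.

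\begin{proof}
  Let $S=\sum_{i=1}^n\lambda_i S_i$ be a finite convex combination of
  slices of $B_X$, with $\lambda_i\ge 0$ and $\sum_{i=1}^n\lambda_i=1$, and
  let $\varepsilon>0$. By assumption there are points $h_i\in S_i$ and
  $\varphi\in B_X$, independent of $i$, such that $h_i\pm\varphi\in S_i$
  for every $i=1,2,\ldots,n$ and $\|\varphi\|>1-\varepsilon$. Since each
  $S_i$ is convex, the points
  \begin{equation*}
    z^+=\sum_{i=1}^n\lambda_i(h_i+\varphi)\quad\text{and}\quad
    z^-=\sum_{i=1}^n\lambda_i(h_i-\varphi)
  \end{equation*}
  both belong to $\sum_{i=1}^n\lambda_i S_i=S$. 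Their difference is
  $z^+-z^-=2\varphi$, so
  \begin{equation*}
    \operatorname{diam}S\ge\|z^+-z^-\|=2\|\varphi\|>2(1-\varepsilon).
  \end{equation*}
  As $\varepsilon>0$ was arbitrary and $\operatorname{diam}S\le 2$, we conclude
  that $\operatorname{diam}S=2$. Hence $X$ has the strong diameter 2
  property.
\end{proof}
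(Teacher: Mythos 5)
Your proof is correct and is essentially identical to the paper's own argument: the paper likewise sets $\psi_{\pm}=\sum_j\lambda_j h_j\pm\varphi$ and notes $\|\psi_+-\psi_-\|=\|2\varphi\|>2-2\varepsilon$. (One tiny remark: convexity of the $S_i$ is not needed to see $z^{\pm}\in\sum_i\lambda_i S_i$; membership follows directly from $h_i\pm\varphi\in S_i$ and the definition of a convex combination of sets.)
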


\begin{proof} Let $S$ be a finite convex combination of slices, i.e.,
  $S=\sum_{j=1}^n \lambda_j S_{j}$ where $0<\lambda_j<1$ and
  $\sum_{j=1}^n \lambda_j=1$. For every $1\leq j\leq n$, take $h_j\in
  S_j$ and $\varphi\in B_X$ such that $h_j\pm\varphi\in S_j$ and
  $\|\varphi\|>1-\varepsilon$. Define
  \begin{equation*}
    \psi_+=\sum_{j=1}^n \lambda_j h_j + \varphi
    \:\:\mbox{and}\:\:
    \psi_-=\sum_{j=1}^n \lambda_j h_j - \varphi.
  \end{equation*}
  Then
  $\psi_+,\psi_-\in S$ and $\|\psi_+ -\psi_-\|=\|2\varphi\|>2-2\varepsilon$.
\end{proof}

\begin{thm}\label{cor:unifalg-strong}
  Infinite-dimensional uniform algebras have the strong diameter 2 property.
\end{thm}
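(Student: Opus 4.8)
The strategy is to verify the hypothesis of Lemma~\ref{strongsufficient}: given finitely many slices $S_1,\dots,S_n$ of $B_A$ (where $A$ is an infinite-dimensional uniform algebra, say on a compact Hausdorff space $K$) and $\varepsilon>0$, I must produce functions $h_1,\dots,h_n\in A$ with $h_j\in S_j$ and a single $\varphi\in B_A$ with $\|\varphi\|_\infty>1-\varepsilon$ such that $h_j\pm\varphi\in S_j$ for every $j$. The underlying intuition, which already powers the proof of Theorem~\ref{teo1} for uniform algebras (in \cite{NW}), is that the functional defining a slice can be approximated by a measure supported off a small peak set, so one has freedom to add a bump function of norm close to $1$ that is concentrated on that peak set and essentially invisible to all the relevant functionals simultaneously.

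Concretely, I would first fix $x_j^\ast\in S_{A^\ast}$ and $\delta_j>0$ with $S_j=S(x_j^\ast,\delta_j)$, and pick $g_j\in S_j$ with $x_j^\ast(g_j)>1-\delta_j/2$. Representing each $x_j^\ast$ by a regular Borel measure $\mu_j$ on $K$ (via the restriction from $C(K)^\ast$, using $\|x_j^\ast\|=1$), I use regularity and the infinite-dimensionality of $A$ to find a nonempty peak set, or more precisely a point $t_0\in K$ and a peak function $u\in A$ with $u(t_0)=1=\|u\|_\infty$ such that $\sum_{j=1}^n|\mu_j|(\{t: |u(t)|>\eta\})$ is as small as we like, for suitable small $\eta$. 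Then $\varphi:=u^N$ for large $N$ has $\|\varphi\|_\infty=1$ and $|x_j^\ast(\varphi)|=|\int u^N\,d\mu_j|$ can be made $<\delta_j/4$ for all $j$ at once, since $u^N\to 0$ pointwise off the peak set and is bounded by $1$. Setting $h_j:=g_j$ (rescaled slightly if necessary so that $\|h_j\pm\varphi\|_\infty\le 1$ — here one exploits that $u$ peaks at a single point and $g_j$ can be chosen with $\|g_j\|_\infty$ a little less than $1$ while still lying deep in $S_j$, or one multiplies $g_j$ by $(1-u^N)$-type factors) gives $x_j^\ast(h_j\pm\varphi)>1-\delta_j$, so $h_j\pm\varphi\in S_j$, and $\|\varphi\|_\infty>1-\varepsilon$ once $\varphi$ is normalized appropriately.

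The main obstacle is the simultaneous control of the norm constraint $h_j\pm\varphi\in B_A$ together with $h_j\in S_j$: adding a unimodular bump $\varphi$ to $h_j$ can push the sup-norm above $1$ at the peak point. The clean fix, used in \cite{NW}, is to take $h_j=(1-\varphi)g_j$ (so that $h_j$ vanishes where $\varphi$ is large and $h_j\pm\varphi$ stays in the ball because the two summands have essentially disjoint supports), check that $(1-\varphi)g_j$ is still in $S_j$ because $\varphi$ is small in $|\mu_j|$-measure and $g_j$ was chosen strictly inside $S_j$, and verify $\|h_j\pm\varphi\|_\infty\le\max(\|g_j\|_\infty,\|\varphi\|_\infty)+(\text{small})\le 1$ using the pointwise estimate $|(1-\varphi(t))g_j(t)\pm\varphi(t)|\le (1-|\varphi(t)|)+|\varphi(t)|=1$ when $g_j,\varphi$ are suitably normalized and $\varphi(t)\in[0,1]$. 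Once this bookkeeping is arranged, Lemma~\ref{strongsufficient} applies verbatim and yields the strong diameter 2 property.
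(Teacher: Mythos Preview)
Your plan is essentially the paper's own approach: both reduce to verifying the hypothesis of Lemma~\ref{strongsufficient} via the peak-function/representing-measure technique of \cite{NW}, and the paper's proof is in fact little more than a citation of \cite{NW} together with an explicit choice of constants ($\delta\le\varepsilon/12$ with $\varepsilon=\min_j\varepsilon_j$) and a check that the rescalings $h_j'=h_j/(1+4\delta)$, $\varphi'=\varphi/(1+4\delta)$ satisfy $h_j'\pm\varphi'\in S_j$ and $\|\varphi'\|>1-\varepsilon$. The one place your sketch is looser than it should be is the pointwise bound $|(1-\varphi(t))g_j(t)\pm\varphi(t)|\le(1-|\varphi(t)|)+|\varphi(t)|=1$: this uses $|1-\varphi(t)|\le 1-|\varphi(t)|$, which forces $\varphi(t)\in[0,1]$ and is not available for a generic complex-valued peak function $u^N$, so when you write out the details you should rely on the actual estimates and normalization in \cite{NW} (as the paper does) rather than on this heuristic inequality.
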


\begin{proof}
  An inspection of the proof of \cite[Theorem 2]{NW}
  shows that
  a uniform algebra fulfills the
  conditions in Lemma~\ref{strongsufficient}.
  Using their notation, let $\varepsilon = \min_j \varepsilon_j$ and
  choose
  $0<\delta\leq \varepsilon/12$ instead
  (to simplify for point (ii) below) and write
  \begin{equation*}
    \frac{h_j}{1+4\delta}\pm\frac{\varphi}{1+4\delta}
    =  h'_{j}\pm\varphi'.
  \end{equation*}
  Then $h'_{j}\pm\varphi' \in S_j(\ell_j,\varepsilon_j)$,
  since by using facts from the proof
  of \cite[Theorem 1]{NW}, we get
  \begin{itemize}
  \item [(i)] $\|h'_{j}\|\leq (1+3\delta)/(1+4\delta)<1$,
  \item [(ii)] $1 > \|\varphi'\|=\frac{1}{1+4\delta}
    >\frac{1}{1+\frac{\varepsilon}{3}}>1-\varepsilon$,
  \item [(iii)]
    $\|h_j' \pm \varphi'\| \le 1$,
  \item [(iv)] $\ell_j (h_{j})\geq 1-\delta -4\delta
    =1-5\delta$, and
  \item [(v)] $|\ell_j(\varphi)| < 2\delta$.
  \end{itemize}
  Finally (iv) and (v) imply that
  \begin{equation*}
    \ell_j(h'_j \pm \varphi')
    = \frac{\ell_j(h_j)-|\ell_j(\varphi)|}{1+4\delta}
    > 
    \frac{1-5\delta-2\delta}{1+4\delta}
    \ge
    \frac{1-\frac{7\varepsilon}{12}}{1+\frac{\varepsilon}{3}}
    \ge 1-\varepsilon_j.
  \end{equation*}
\end{proof}

For the proof that spaces with the Daugavet property
have the strong diameter 2 property we will need a
version of \cite[Lemma~2.1]{KSSW}.
The lemma is used in the proof of \cite[Lemma~3]{SHV},
but is not stated explicitly. We include a proof for easy reference.

\begin{lem}\label{lem:shvydkoylem2.2}
  If $X$ has the Daugavet property, then
  for every $\varepsilon > 0$, every $y_0 \in X$ and every slice $S(x_0^*,\alpha_0)$
  of $B_X$ there is another slice $S(x_1^*,\alpha_1) \subset S(x_0^*,\alpha_0)$
  such that for every $x \in S(x_1^*,\alpha_1)$
  the inequality $\|\lambda x + y_0\| \ge
  \lambda + \|y_0\| - \varepsilon$
  holds for every $0 \le \lambda \le 1$.
\end{lem}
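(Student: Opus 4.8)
The plan is to produce $S(x_1^*,\alpha_1)$ by \emph{averaging} the given functional $x_0^*$ with a suitable almost‑norming functional $\psi$ for $y_0$. The point of the construction is this: if one can locate a single vector $z$ that is simultaneously almost norming for $x_0^*$ and for $\psi$, then $\|x_0^*+\psi\|$ is forced close to $2$, and consequently a thin enough slice of $(x_0^*+\psi)/\|x_0^*+\psi\|$ lands inside $S(x_0^*,\alpha_0)$ while also being thin ``in the $\psi$ direction''; evaluating against $\psi$ then gives the estimate on $\|\lambda x+y_0\|$. The locating of $z$ is exactly what the Daugavet property supplies.

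The case $y_0=0$ is trivial (take $S(x_0^*,\min\{\alpha_0,\varepsilon\})$, whose elements have norm $>1-\varepsilon$), so assume $y_0\neq 0$ and set $u=y_0/\|y_0\|\in S_X$. I would fix parameters $\beta=\eta=\alpha_1=\rho>0$, with $\rho$ chosen at the very end so small that $4\rho\le\min\{\alpha_0,\ \varepsilon/(1+\|y_0\|)\}$. Applying the first of the two equivalent formulations of the Daugavet property recalled above to the \emph{thin} slice $S(x_0^*,\beta)$, the unit vector $u$, and $\eta$, one obtains $z\in S(x_0^*,\beta)$ with $\|z+u\|\ge 2-\eta$. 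Choose $\psi\in S_{\Xs}$ with $\psi(z+u)=\|z+u\|$; since $\psi(z)\le1$ and $\psi(u)\le1$ this forces $\psi(z)\ge1-\eta$ and $\psi(u)\ge1-\eta$, i.e. $\psi(y_0)\ge(1-\eta)\|y_0\|$. Set $x_1^*=(x_0^*+\psi)/\|x_0^*+\psi\|$; this makes sense because $(x_0^*+\psi)(z)>(1-\beta)+(1-\eta)>0$, and moreover $\|x_0^*+\psi\|\ge(x_0^*+\psi)(z)>2-\beta-\eta$.

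Now comes the main computation: for $x\in S(x_1^*,\alpha_1)$ one gets $(x_0^*+\psi)(x)>\|x_0^*+\psi\|(1-\alpha_1)>(2-\beta-\eta)(1-\alpha_1)>2-\gamma$ with $\gamma:=2\alpha_1+\beta+\eta=4\rho$, and splitting off $x_0^*(x)\le1$ and $\psi(x)\le1$ one at a time yields $x_0^*(x)>1-\gamma$ and $\psi(x)>1-\gamma$. The first inequality, since $\gamma\le\alpha_0$, shows $S(x_1^*,\alpha_1)\subseteq S(x_0^*,\alpha_0)$. For the norm estimate, take $x\in S(x_1^*,\alpha_1)$ and $0\le\lambda\le1$; then
\begin{align*}
  \|\lambda x+y_0\|&\ge\psi(\lambda x+y_0)=\lambda\psi(x)+\psi(y_0)\\
  &\ge\lambda(1-\gamma)+(1-\eta)\|y_0\|\ge\lambda+\|y_0\|-\bigl(\gamma+\eta\|y_0\|\bigr)\ge\lambda+\|y_0\|-\varepsilon,
\end{align*}
where the last step uses $\gamma+\eta\|y_0\|\le\gamma(1+\|y_0\|)=4\rho(1+\|y_0\|)\le\varepsilon$.

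I expect the only genuinely delicate point to be the bookkeeping behind the containment $S(x_1^*,\alpha_1)\subseteq S(x_0^*,\alpha_0)$: it is essential to feed the Daugavet formulation the much smaller slice $S(x_0^*,\beta)$ rather than $S(x_0^*,\alpha_0)$ itself, so that $z$ sits deep inside the level sets of $x_0^*$ and $\|x_0^*+\psi\|$ is close enough to $2$ for the unavoidable loss incurred in passing from $x_1^*$ back to $x_0^*$ to stay below $\alpha_0$. Once that precaution is taken, everything else is routine estimation.
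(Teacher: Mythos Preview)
Your argument is correct. It differs from the paper's proof in the way the Daugavet hypothesis is invoked and in the precise shape of $x_1^*$. The paper works with the operator equation: setting $T=x_0^*\otimes y_0$ and using $\|I+T\|=1+\|y_0\|$, it picks $y^*\in S_{\Xs}$ almost norming $(I+T)^*$ and takes $x_1^*$ to be the normalization of $(I+T)^*y^*=y^*+y^*(y_0)\,x_0^*$. You instead use the geometric slice characterization to locate a single witness $z$ deep in $S(x_0^*,\beta)$ with $\|z+u\|$ close to $2$, take $\psi$ norming $z+u$, and set $x_1^*$ to be the normalization of $x_0^*+\psi$. Both constructions produce a normalized sum of $x_0^*$ with a functional almost norming $y_0$; the operator route gets the large norm of the sum from $\|I^*+T^*\|$, while yours gets it from the common witness $z$. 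Your version is perhaps a shade more elementary (no operators, equal weights), at the cost of having to pass through the intermediate slice $S(x_0^*,\beta)$; the paper's version never needs to shrink the slice first, since the weight $y^*(y_0)$ on $x_0^*$ does that work automatically.
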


\begin{proof}
  Only the case $y_0 \neq 0$ needs proof. By making the slice
  $S(x_0^*,\alpha_0)$ smaller we may assume
  without loss of generality
  that $2\alpha_0 \|y_0\| < \varepsilon$.

  Define $T = x_0^* \otimes y_0$, so that $\|T\| = \|y_0\|$.
  By the Daugavet equation $\|I^* + T^*\| = \|I+T\| = 1 + \|y_0\|$.
  In order to guarantee that $0<\alpha_1<1$, one can choose $y^* \in
  S_{\Xs}$ such that $\|(I+T)^*y^*\| \ge 1 + \|y_0\|(1-\alpha_0)$
  and $y^*(y_0) \ge 0$.
  Define
  \begin{equation*}
    x_1^* = \frac{(I+T)^*y^*}{\|(I + T)^*y^*\|}
    \qquad \mbox{and} \qquad
    \alpha_1 =
    1 - \frac{1 + \|y_0\|(1 - \alpha_0)}{\|(I + T)^*y^*\|}.
  \end{equation*}
  Then, for all $x \in S(x_1^*,\alpha_1)$,
  \begin{equation*}
    y^*(x) + y^*(y_0)x_0^*(x) =
    \langle (I+T)^* y^*, x \rangle
    \ge (1-\alpha_1)\|(I+T)^* y^*\|
    = 1 + \|y_0\|(1 - \alpha_0).
  \end{equation*}
  We get
  \begin{equation}\label{eq:1}
    \|y_0\|x_0^*(x) \ge y^*(y_0)x_0^*(x)
    \ge 1 + \|y_0\|(1-\alpha_0) - y^*(x)
    \ge \|y_0\|(1-\alpha_0),
  \end{equation}
  hence $x_0^*(x) \ge 1-\alpha_0$
  which shows that $S(x_1^*,\alpha_1) \subseteq S(x_0^*,\alpha_0)$.
  Since $x_0^*(x) \le 1$ we get
  \begin{equation}\label{eq:2}
    y^*(x) + y^*(y_0) \ge y^*(x) + y^*(y_0)x_0^*(x)
    \ge 1 + \|y_0\|(1-\alpha_0),
  \end{equation}
  and thus $\|x + y_0\| \ge 1 + \|y_0\|(1-\alpha_0)$.
  
  Finally, by \eqref{eq:1} and \eqref{eq:2},
  \begin{align*}
    y^*(\lambda x) + y^*(y_0)
    &= \lambda y^*(x) + y^*(y_0) \\
    &\ge \lambda \bigl(1 + \|y_0\|(1-\alpha_0) - y^*(y_0)x_0^*(x)\bigr)
    + \|y_0\|(1-\alpha_0) \\
    &\ge \lambda (1 + \|y_0\|(1-\alpha_0) - \|y_0\|)
    + \|y_0\|(1-\alpha_0) \\
    &= \lambda + \|y_0\| - \alpha_0 \|y_0\|(1+\lambda).
  \end{align*}
  Hence $\|\lambda x + y_0\| \ge \lambda + \|y_0\| -
  \alpha_0 \|y_0\|(1+\lambda) > \lambda + \|y_0\| - \varepsilon$.
\end{proof}

Now we are ready to show that spaces with the Daugavet property have
the strong diameter $2$ property.
The idea of the proof is due to Shvydkoy (see \cite[Lemma~3]{SHV}),
but we have to apply this idea twice.

\begin{thm}\label{thm:daugavet-strong}
  If $X$ has the Daugavet property, then
  $X$ has the strong diameter $2$ property.
\end{thm}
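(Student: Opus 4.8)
The plan is to verify the hypothesis of Lemma~\ref{strongsufficient}: given slices $S_1,\dots,S_n$ of $B_X$ and $\varepsilon>0$, I must produce points $h_j\in S_j$ and a single $\varphi\in B_X$ with $\|\varphi\|>1-\varepsilon$ and $h_j\pm\varphi\in S_j$ for all $j$. Write $S_j=S(x_j^*,\alpha_j)$ and set $\alpha=\min_j\alpha_j$; shrinking, assume all $\alpha_j=\alpha$ and that $\alpha$ is small relative to $\varepsilon$. The strategy, following Shvydkoy, is to build $\varphi$ as a near-norm-one element that is ``almost invisible'' to all the functionals $x_j^*$ simultaneously, then set $h_j$ to be a point of $S_j$ whose $x_j^*$-value has enough slack to absorb $\pm\varphi$.

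First I would construct $\varphi$. Pick any $z_0\in S_X$. By the Daugavet property (in the form $B_X=\overline{\conv}\,\Delta_\varepsilon(z_0)$, or directly via the slice formulation) there is a $\varphi\in B_X$ with $\|\varphi\|>1-\varepsilon/2$; the point is to additionally arrange $|x_j^*(\varphi)|$ small for every $j$. This is where I expect to apply the Shvydkoy idea \emph{twice}, as the authors warn. The natural move: apply Lemma~\ref{lem:shvydkoylem2.2} iteratively. Starting from a fixed slice, one uses the lemma with $y_0$ ranging over a sequence of directions to pass to smaller and smaller subslices on which $\|\lambda x+y_0\|\ge\lambda+\|y_0\|-\varepsilon$; choosing a point $x$ in the final subslice and exploiting that norm-additivity forces the relevant functionals to nearly vanish on the difference of two such points, one extracts $\varphi$ with $\|\varphi\|$ close to $1$ and $x_j^*(\varphi)$ close to $0$ for all $j=1,\dots,n$ at once. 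The ``applied twice'' structure likely means: first pass to subslices of each $S_j$ (using Lemma~\ref{lem:shvydkoylem2.2} with $y_0$ the candidate for $\varphi$) so that adding $\pm\varphi$ keeps $x_j^*$-values high; then, within that, use the Daugavet property again to actually locate a $\varphi$ of near-unit norm that is simultaneously nearly annihilated by all the $x_j^*$.

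Concretely, I would proceed as follows. Fix $z_0\in S_X$. Using Lemma~\ref{lem:shvydkoylem2.2} with the $n$ slices $S_j$ and (successively) the directions that will matter, refine to subslices $S_j'=S(y_j^*,\beta_j)\subset S_j$ so that for any $x\in S_j'$ and any $0\le\lambda\le1$ we have $\|\lambda x + z_0\|$ close to $\lambda+1$. Pick $h_j\in S_j'$. Now the key: by the Daugavet property there is a convex combination of points from $\Delta_{\varepsilon'}(z_0)$ close to some target; arranging things so that the resulting element, call it $\varphi$, satisfies $\|\varphi\|>1-\varepsilon$ and is nearly killed by each $y_j^*$ (hence nearly by each $x_j^*$ after accounting for the refinement), because each $y_j^*$ was built to nearly vanish on differences of elements sitting in the relevant slice. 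Then $y_j^*(h_j\pm\varphi)=y_j^*(h_j)\pm y_j^*(\varphi)>1-\beta_j-\varepsilon'\ge 1-\alpha_j$ after bookkeeping, and $\|h_j\pm\varphi\|\le1$ needs a small rescaling trick exactly as in the uniform algebra proof (Theorem~\ref{cor:unifalg-strong}): replace $h_j,\varphi$ by $h_j/(1+c\delta),\varphi/(1+c\delta)$ to guarantee norm $\le1$ while keeping $\|\varphi'\|>1-\varepsilon$ and the functional values above the thresholds. Lemma~\ref{strongsufficient} then finishes the proof.

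The main obstacle is the simultaneity: Lemma~\ref{lem:shvydkoylem2.2} handles one direction $y_0$ and one slice at a time, whereas I need a \emph{single} $\varphi$ that works for all $n$ slices. Iterating the lemma across the $n$ slices is delicate because each refinement changes the functional, and one must ensure the final $\varphi$—produced by a second application of the Daugavet property—is large in norm while being nearly orthogonal to all the (already refined) functionals at once; controlling the accumulated errors over the $n$ steps, and then squeezing everything back inside $B_X$ via the rescaling, is the technical heart. Once the simultaneous near-invisibility of $\varphi$ is secured, the rest is the routine estimate already modeled in Theorem~\ref{cor:unifalg-strong}.
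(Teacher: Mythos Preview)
Your plan diverges from the paper's argument and has a genuine gap. The paper does \emph{not} verify the hypothesis of Lemma~\ref{strongsufficient} here; that lemma is used for uniform algebras and $M$-ideals, where there is extra structure forcing $\|h_j\pm\varphi\|\le 1$. In the Daugavet setting no such mechanism is available, and your proposal provides none: even if you manage to make $|x_j^*(\varphi)|$ small for all $j$ simultaneously, nothing prevents $\|h_j+\varphi\|$ from being close to $2$, and a rescaling by $1/(1+c\delta)$ cannot fix this unless you already know $\|h_j\pm\varphi\|\le 1+c\delta$. In the uniform algebra proof that bound comes from the multiplicative/peak-point structure, and in the $M$-ideal proof it comes from Werner's basic inequality; the Daugavet property by itself offers no analogue, so the route through Lemma~\ref{strongsufficient} appears blocked.

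What the paper actually does is both simpler and avoids this obstacle entirely. Fix any $y\in S_X$ and use Lemma~\ref{lem:shvydkoylem2.2} iteratively across the $n$ slices (with $y_0$ equal to the running partial sum $y+\lambda_1 x_1+\cdots+\lambda_{j-1}x_{j-1}$) to obtain $x_j\in S_j$ with $\bigl\|\sum_j\lambda_j x_j+y\bigr\|>2-\varepsilon$; in particular $\bigl\|\sum_j\lambda_j x_j\bigr\|\ge 1-\varepsilon$. Normalize this sum to $y_0\in S_X$, then run the \emph{same} iterative construction a second time with $-y_0$ in place of $y$ to obtain $z_j\in S_j$ with $\bigl\|\sum_j\lambda_j z_j-y_0\bigr\|>2-\varepsilon$. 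The triangle inequality then gives $\bigl\|\sum_j\lambda_j z_j-\sum_j\lambda_j x_j\bigr\|\ge 2-2\varepsilon$. That is the meaning of ``apply the idea twice'': two passes of the Shvydkoy iteration, the second aimed at the (normalized) output of the first, producing two points of the convex combination directly---no common $\varphi$, no need to stay inside $B_X$ after perturbation, and no appeal to Lemma~\ref{strongsufficient}.
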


\begin{proof}
  Let $S_j = \{x \in B_X : x_j^*(x) > 1 - \varepsilon_j\}$
  be slices of $B_X$ and let $0 < \lambda_j < 1$
  such that $\sum_{j=1}^n \lambda_j = 1$.
  
  Let $0<\varepsilon <1$ and
  $y \in S_X$. Using Lemma~\ref{lem:shvydkoylem2.2}
  we can find $x_1 \in S_1$ such that
  $\|\lambda_1 x_1 + y\| > \lambda_1 + 1+\varepsilon/n$.
  Using Lemma~\ref{lem:shvydkoylem2.2} repeatedly
  we find $x_j \in S_j$ such that
  \begin{equation*}
    \|\sum_{j=1}^n \lambda_j x_j + y\| > \sum_{j=1}^n \lambda_j  + 1
    - \varepsilon = 2 - \varepsilon.
  \end{equation*}
  In particular,
  \begin{equation*}
    1 \ge \| \sum_{j=1}^n \lambda_j x_j \|
    \ge \| \sum_{j=1}^n \lambda_j x_j  + y\| - \|-y\|
    \ge 2 - \varepsilon - 1 = 1-\varepsilon.
  \end{equation*}
  Define $y_0 = \frac{\sum_{j=1}^n \lambda_j x_j}{\|\sum_{j=1}^n
    \lambda_j x_j\|}$.
  Then $\|y_0\| = 1$ and $\|y_0 - \sum_{j=1}^n \lambda_j x_j\| \le
  \varepsilon$.
  
  Repeat the procedure above using $-y_0$ instead of $y$
  and find $z_j \in S_j$ such that
  \begin{equation*}
    \|\sum_{j=1}^n \lambda_j z_j - y_0\| \ge 2 - \varepsilon.
  \end{equation*}
  We get
  \begin{equation*}
    \|\sum_{j=1}^n \lambda_j z_j - \sum_{j=1}^n \lambda_j x_j\|
    \ge \|\sum_{j=1}^n \lambda_j z_j - y_0\|
    - \|y_0 - \sum_{j=1}^n \lambda_j x_j\|
    \ge 2 - 2\varepsilon.
  \end{equation*}
  This shows the existence of points in the convex combination
  of the slices with distance arbitrarily close to $2$.
\end{proof}

We will end this section by showing that the bidual of proper $M$-embedded
spaces have the strong diameter 2 property. To do this we will need
some results inspired by \cite{LP}. 
The first lemma is contained in \cite[Lemma~2.1]{LP},
but we provide a short elementary proof.

\begin{lem}\label{lem:vegard}
  Let $X$ and $Y$ be Banach spaces, $W$ a weakly open subset in $Z=X
  \oplus_\infty Y$, and $(x_0,y_0) \in W$. Then there exist weakly open
  subsets $U$ of $X$ and $V$ of $Y$ such that $(x_0,y_0) \in U
  \times V \subset W$. Moreover, if $W$ is a relatively weakly
  open subset of $B_Z$, then $U$ and $V$ can be chosen to be relatively
  weakly open subsets of $B_X$ and $B_Y$ respectively.
\end{lem}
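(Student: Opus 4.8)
The plan is to exploit the two features of the $\oplus_\infty$-sum that make everything split: the dual is $\Zs = \Xs \oplus_1 \Ys$, and, crucially, the unit ball is the product $B_Z = B_X \times B_Y$.

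First I would handle the case when $W$ is weakly open in $Z$. Since $(x_0,y_0) \in W$, there is a basic weak neighbourhood contained in $W$, i.e.\ finitely many functionals $\zs_i = (\xs_i,\ys_i) \in \Zs$ and some $\varepsilon > 0$ with
\begin{equation*}
\{z \in Z : |\zs_i(z-(x_0,y_0))| < \varepsilon,\ 1 \le i \le n\} \subseteq W.
\end{equation*}
Writing $\zs_i(z-(x_0,y_0)) = \xs_i(x-x_0) + \ys_i(y-y_0)$ for $z=(x,y)$, I would then set
\begin{equation*}
U = \{x \in X : |\xs_i(x-x_0)| < \varepsilon/2,\ 1 \le i \le n\}, \qquad V = \{y \in Y : |\ys_i(y-y_0)| < \varepsilon/2,\ 1 \le i \le n\}.
\end{equation*}
These are weakly open in $X$ and $Y$, they contain $x_0$ and $y_0$, and for $(x,y) \in U \times V$ the triangle inequality gives $|\zs_i(z-(x_0,y_0))| < \varepsilon$ for every $i$, so $U \times V$ lies in the chosen basic neighbourhood and hence in $W$.

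For the relative statement I would write $W = \widetilde{W} \cap B_Z$ with $\widetilde{W}$ weakly open in $Z$, apply the first part to $\widetilde{W}$ to obtain weakly open $\widetilde{U} \subseteq X$ and $\widetilde{V} \subseteq Y$ with $(x_0,y_0) \in \widetilde{U} \times \widetilde{V} \subseteq \widetilde{W}$, and then put $U = \widetilde{U} \cap B_X$ and $V = \widetilde{V} \cap B_Y$. These are relatively weakly open in $B_X$ and $B_Y$; since $\|x_0\| \le \|(x_0,y_0)\| \le 1$ and likewise $\|y_0\| \le 1$, they contain $x_0$ and $y_0$; and because the $\oplus_\infty$-norm makes $B_Z = B_X \times B_Y$, we get $U \times V \subseteq (\widetilde{U} \times \widetilde{V}) \cap B_Z \subseteq \widetilde{W} \cap B_Z = W$.

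There is essentially no hard step here: the only point that genuinely uses the $\oplus_\infty$ structure (rather than a general $\ell_p$-sum) is the identity $B_Z = B_X \times B_Y$, which is exactly what allows the product of the two relatively weakly open pieces to stay inside $B_Z$; for other values of $p$ this fails and one must argue as in Theorem~\ref{thm:stable-ell_p} instead. The only thing to be careful about is carrying the halving $\varepsilon \mapsto \varepsilon/2$ through so that the sum of the two estimates is strictly below $\varepsilon$.
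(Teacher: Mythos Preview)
Your proof is correct and follows essentially the same argument as the paper: pick a basic weak neighbourhood of $(x_0,y_0)$ inside $W$, split each functional into its $\Xs$- and $\Ys$-components, halve the tolerance to get weakly open $U$ and $V$, and intersect with $B_X$ and $B_Y$ for the relative version. Your write-up is in fact slightly more explicit than the paper's in spelling out why $B_Z = B_X \times B_Y$ is the key fact used in the last step.
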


\begin{proof}
  There exists $f_i =(\xs_i,\ys_i) \in \Xs \times \Ys$ where
  $i=1,2,\ldots, n$ such that
  \begin{equation*}
    W_0 = \{(x,y) \in Z: |f_i(x,y)-f_i(x_0,y_0)| < 1,
    i=1,2, \ldots, n\}
    \subset W.
  \end{equation*}
  Let $U_0 = \{x \in X: |\xs_i(x)-\xs_i(x_0)| < \frac{1}{2}, i=1,2,\ldots, n\}$
  and
  $V_0 = \{y \in Y: |\ys_i(y)-\ys_i(y_0)| < \frac{1}{2}, i=1,2,\ldots, n\}$.
  Then $U_0$ and $V_0$ are weakly open in
  $X$ and $Y$ respectively and $U_0 \times V_0 \subset W_0 \subset
  W$.

  For the last part, just write $U = U_0 \cap B_X$ and $V = V_0 \cap B_Y$.
\end{proof}

Lemma~2.1 in \cite{LP} shows that an $\ell_\infty$-sum of two Banach spaces
has the diameter $2$ property if one of the components has.
Next we show that a similar result holds for the strong diameter $2$ property.

\begin{prop}\label{prop:vegard2}
  Let $X$ and $Y$ be Banach spaces and suppose $X$ has the strong
  diameter 2 property. Then $X \oplus_\infty Y$ has the strong
  diameter 2 property.
\end{prop}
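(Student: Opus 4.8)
The plan is to reduce a finite convex combination of slices of $B_{X \oplus_\infty Y}$ to a finite convex combination of slices of $B_X$, apply the strong diameter 2 property of $X$, and then lift the resulting far-apart points back up to $Z = X \oplus_\infty Y$. First I would write $Z^* = X^* \oplus_1 Y^*$, so a norm-one functional on $Z$ has the form $f = (x^*, y^*)$ with $\|x^*\| + \|y^*\| = 1$. Given slices $S_j = S(f_j, \varepsilon_j)$ of $B_Z$ with $f_j = (x_j^*, y_j^*)$ and weights $\lambda_j$ summing to $1$, and given $\varepsilon > 0$, I would first handle the degenerate situation where some $x_j^*$ is small: since $\|x_j^*(x)\| \le \|x_j^*\|$ for $x \in B_X$, one can always take elements of the slice whose $X$-coordinate contributes essentially nothing, so the slices whose $x_j^*$ has small norm can be dealt with trivially by choosing any fixed point there. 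For the slices where $\|x_j^*\|$ is bounded below, I would rescale: let $\tilde{x}_j^* = x_j^*/\|x_j^*\|$ and observe that if $x \in B_X$ satisfies $\tilde{x}_j^*(x) > 1 - \delta$ for $\delta$ small enough (depending on $\varepsilon_j$ and $\|x_j^*\|$ and $\|y_j^*\|$), then pairing it with any fixed $y \in B_Y$ with $y_j^*(y)$ close to $\|y_j^*\|$ puts $(x, y)$ into $S_j$.

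The cleanest route, though, is probably to invoke Bourgain's lemma in the form already discussed in the excerpt together with Lemma~\ref{lem:vegard}: a finite convex combination of slices is contained in a relatively weakly open set only in one direction, so instead I would argue directly. Pick a point $(a, b) \in \sum_j \lambda_j S_j$ realized by $(a, b) = \sum_j \lambda_j (u_j, v_j)$ with $(u_j, v_j) \in S_j$. The key observation is: for each $j$, the set $T_j := \{ x \in B_X : (x, v_j) \in S_j \}$ is (when non-empty, which it is since $u_j \in T_j$) either all of $B_X$ — if $x_j^* = 0$ — or a slice of $B_X$, namely $\{ x : x_j^*(x) > 1 - \varepsilon_j - y_j^*(v_j) \}$, after normalizing $x_j^*$. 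Applying the strong diameter 2 property of $X$ to the finite convex combination $\sum_j \lambda_j T_j$ (treating any $T_j = B_X$ as the trivial slice $S(x^*, 1)$ for an arbitrary $x^*$, or better, noting that adding a full ball only helps), I get points $p = \sum_j \lambda_j p_j$ and $q = \sum_j \lambda_j q_j$ in $\sum_j \lambda_j T_j$ with $\|p - q\| > 2 - \varepsilon$. Then $(p_j, v_j) \in S_j$ and $(q_j, v_j) \in S_j$ for all $j$, so $(p, b) = \sum_j \lambda_j(p_j, v_j)$ and $(q, b) = \sum_j \lambda_j (q_j, v_j)$ both lie in $\sum_j \lambda_j S_j$, and
\begin{equation*}
  \|(p, b) - (q, b)\|_Z = \max\{\|p - q\|_X, 0\} = \|p - q\|_X > 2 - \varepsilon.
\end{equation*}
Since $\varepsilon > 0$ was arbitrary and the diameter is at most $2$, the convex combination of slices has diameter $2$, and $Z = X \oplus_\infty Y$ has the strong diameter 2 property.

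The main obstacle I expect is the bookkeeping in the case $x_j^* = 0$ (or, more honestly, the case where the slice $S_j$ forces $y_j^*(v_j)$ so close to its maximum that the residual room $\varepsilon_j - (\text{something})$ for the $X$-coordinate is non-positive — but in fact since $u_j \in T_j$ this residual room is automatically positive, so $T_j$ genuinely is a non-empty slice or the whole ball). One has to be slightly careful that the $v_j$ are held fixed throughout: we choose one witness point $(u_j, v_j) \in S_j$ first, freeze the second coordinate $v_j$, and only then vary the first coordinate using $X$'s strong diameter 2 property. A minor point is that the definition of "finite convex combination of slices" allows $\lambda_j = 0$; those terms can simply be discarded. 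Everything else is the routine verification that a slice of $B_Z$ with second coordinate frozen is a slice (or the full ball) of $B_X$, together with the identity $\|(x, y) - (x', y)\|_Z = \|x - x'\|_X$, which makes the $\ell_\infty$-sum the natural place for this argument to work.
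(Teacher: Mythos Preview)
Your argument is correct and is in fact more elementary than the paper's. The paper proceeds by picking a witness $(x_0^i,y_0^i)\in S_i$, invoking Lemma~\ref{lem:vegard} to find relatively weakly open sets $U_i\subset B_X$, $V_i\subset B_Y$ with $U_i\times V_i\subset S_i$, and then observing that the projection $P:Z\to X$ maps $\sum_i\lambda_i S_i$ onto a set containing $\sum_i\lambda_i U_i$; since the strong diameter~2 property (via Bourgain's lemma) forces convex combinations of relatively weakly open sets in $B_X$ to have diameter~2 and $\|P\|=1$, the conclusion follows. Your route bypasses both Lemma~\ref{lem:vegard} and Bourgain's lemma: by freezing the second coordinate $v_j$ of a chosen witness in each $S_j$, the section $T_j=\{x\in B_X:(x,v_j)\in S_j\}$ is already a genuine slice of $B_X$ (or all of $B_X$ when $x_j^\ast=0$, which you correctly note can be shrunk to any slice without harm), so the strong diameter~2 property of $X$ applies directly, and the identity $\|(p,b)-(q,b)\|_Z=\|p-q\|_X$ finishes the job. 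The paper's approach has the advantage of packaging the splitting step into a reusable lemma (which is then dualized for Proposition~\ref{prop:vegard*}); yours has the advantage of being self-contained and avoiding the appeal to Bourgain's lemma altogether.
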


\begin{proof}
  Let $Z=X \oplus_\infty Y$ and let $P:Z \to X$ be the natural projection onto
  $X$.
  Let $W=\sum_{i=1}^n \lambda_i S_i$ be a convex combination of
  slices
  $S_i = \{(x,y) \in B_Z: g_i(x,y) > 1-\varepsilon_i\}$ for some $g_i \in
  S_{Z^{\ast}}$ and $\varepsilon_i > 0$.
  Obviously $W \subset B_Z$.
  It is enough to show that $P(W)$
  is non-empty and contains a non-empty finite convex combination of
  relatively weakly open subsets of $B_X$.
  Indeed, the conclusion then follows from Bourgain's
  result (see \cite[Lemma II.1 p. 26]{GGMS}) 
  that every relatively weakly open subset of the unit ball
  contains a convex combination of slices and from the
  assumption, since $\|P\|=1$.

  Since $W$ is
  non-empty, there is some $(x_0,y_0) \in W$. Thus $x_0=P(x_0,y_0) \in
  P(W)$, so $P(W)$ is non-empty.
  Finally, write $(x_0,y_0)=\sum_{i=1}^n \lambda_i (x_0^i,y_0^i)$ where
  $(x_0^i,y_0^i) \in S_i$. Since each slice $S_i$ is relatively weakly
  open, it is possible by Lemma \ref{lem:vegard} to find relatively
  weakly open sets $U_i \subset B_X$ and $V_i \subset B_Y$ such that
  $(x_0^i,y_0^i) \in U_i \times V_i \subset S_i$. Thus
  \begin{multline*}
    P(W) \supset P\biggl(\sum_{i=1}^n \lambda_i (U_i \times V_i) \cap B_Z\biggr)
    \\ =
    P\biggl((\sum_{i=1}^n \lambda_i U_i) \times (\sum_{i=1}^n \lambda_i
    V_i) \cap \bigl(B_X \times B_Y\bigr)\biggr) = \sum_{i=1}^n \lambda_i U_i \cap
    B_X.
  \end{multline*}
  Since each $U_i$ is weakly open, we are done.
\end{proof}

Using the above proposition we can strengthen
\cite[Proposition~2.6]{LP}.

\begin{prop}\label{prop2.6iLP-strongversjon}
  Let $X$ be a Banach space containing a subspace isomorphic
  to $c_0$. Then there exists an equivalent norm on $X$ such that
  $X$ has the strong diameter $2$ property in the new norm.
\end{prop}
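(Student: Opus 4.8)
The plan is to reuse the renorming from the proof of \cite[Proposition~2.6]{LP} and, instead of appealing to Theorem~\ref{teo3}, to feed it into the machinery of the present section, namely Proposition~\ref{prop:vegard2} together with Lemma~\ref{strongsufficient}. Recall that the argument for \cite[Proposition~2.6]{LP} exploits the isomorphic copy of $c_{0}$ to produce an equivalent norm on $X$ under which $X$ contains a subspace $Y$ that is isometric to $c_{0}$ and is a (proper) $M$-ideal in $X$. Re-norming $X$ in this way, we may assume from the outset that $X$ has this property; it then suffices to show that $X$ has the strong diameter $2$ property.

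First I would pass to the bidual. Since $Y$ is an $M$-ideal in $X$, the annihilator $Y^{\perp}$ is an $L$-summand in $\Xs$, and dualizing once more $Y^{\perp\perp}$ is an $M$-summand in $\Xss$; that is, $\Xss=Y^{\perp\perp}\oplus_{\infty}W$ for some closed subspace $W$ (standard $M$-structure theory, see \cite[Chapter~I]{HWW}). Now $Y^{\perp\perp}$ is isometric to $\Yss$, hence to $c_{0}^{\ast\ast}=\ell_{\infty}$. The space $\ell_{\infty}$ has the strong diameter $2$ property: it satisfies the hypothesis of Lemma~\ref{strongsufficient}, since, given finitely many slices, one may take the common perturbation to be $\varphi=(1-\eta)e_{N}$ for $N$ large and choose in each slice a deep point whose $N$-th coordinate has been reset to $0$ --- this works because each slice functional restricted to $c_0$ lies in $\ell_1$, so its value at $e_N$ is as small as one likes for $N$ large (this also follows from Theorem~\ref{thm-memb-strong}, $\ell_{\infty}$ being the bidual of the proper $M$-embedded space $c_{0}$). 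Applying Proposition~\ref{prop:vegard2} to $\Xss=Y^{\perp\perp}\oplus_{\infty}W$, we conclude that $\Xss$ has the strong diameter $2$ property.

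Next I would descend from $\Xss$ to $X$. Let $C=\sum_{j=1}^{n}\lambda_{j}S(x_{j}^{\ast},\varepsilon_{j})$ be a finite convex combination of slices of $B_{X}$, with $x_{j}^{\ast}\in S_{\Xs}$, and set $\widetilde{C}=\sum_{j=1}^{n}\lambda_{j}\widetilde{S}_{j}$, where $\widetilde{S}_{j}=\{x^{\ast\ast}\in B_{\Xss}:x^{\ast\ast}(x_{j}^{\ast})>1-\varepsilon_{j}\}$ is the corresponding slice of $B_{\Xss}$. By Goldstine's theorem $B_{X}$ is $w^{\ast}$-dense in $B_{\Xss}$, and each $\widetilde{S}_{j}$ is relatively $w^{\ast}$-open in $B_{\Xss}$, so $S(x_{j}^{\ast},\varepsilon_{j})=\widetilde{S}_{j}\cap B_{X}$ is $w^{\ast}$-dense in $\widetilde{S}_{j}$; taking convex combinations, $C$ is $w^{\ast}$-dense in $\widetilde{C}$. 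Since the norm of $\Xss$ is $w^{\ast}$ lower semicontinuous, the norm-diameter of $C$ coincides with that of its $w^{\ast}$-closure, which is at least $\operatorname{diam}\widetilde{C}=2$. Hence $\operatorname{diam}C=2$, and so $X$ --- in the new norm, and therefore also in its original norm --- has the strong diameter $2$ property.

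The genuine obstacle in this scheme is the renorming step itself, i.e.\ the construction of an equivalent norm on $X$ turning a copy of $c_{0}$ into an $M$-ideal; but this is exactly what \cite[Proposition~2.6]{LP} supplies, so I would simply invoke it. Everything afterwards is soft: the transfer of an $M$-ideal to an $M$-summand in the bidual is classical $M$-structure theory, and the descent from the bidual is a routine $w^{\ast}$-density argument. (Should one prefer to bypass the bidual, one could instead establish a ``strong'' version of Theorem~\ref{teo3} directly --- if $Y$ is a proper $M$-ideal in $X$ with $\Xs=Z\oplus_{1}Y^{\perp}$ and $Z$ $1$-norming, then $X$ has the strong diameter $2$ property --- by adapting the proof of Theorem~\ref{thm-memb-strong}; but the route through $\Xss$ is shorter.)
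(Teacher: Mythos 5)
Your proof is correct and follows essentially the same route as the paper, whose own proof just says to repeat the argument of \cite[Proposition~2.6]{LP} using that $\ell_\infty$ has the strong diameter 2 property together with Proposition~\ref{prop:vegard2} --- i.e.\ precisely the renorming that turns the copy of $c_0$ into an $M$-ideal, the bidual decomposition $\Xss = Y^{\perp\perp}\oplus_\infty W$ with $Y^{\perp\perp}\cong\ell_\infty$, and the Goldstine/lower-semicontinuity descent that you spell out. The only slip is the parenthetical claim that $X$ ``therefore also in its original norm'' has the strong diameter 2 property: this is false in general (the paper's own example $\mathbb{R}\oplus_1 c_0$ contains $c_0$ isometrically yet fails even the local diameter 2 property), but it is harmless here since the proposition only asserts the existence of an equivalent norm.
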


\begin{proof}
  The proof is similar to the proof of \cite[Proposition~2.6]{LP}.
  The only difference is that we use that $\ell_\infty$ has the
  strong diameter $2$ property and Proposition~\ref{prop:vegard2}.
\end{proof}

Next we have dual versions of Lemma~\ref{lem:vegard}
and Proposition~\ref{prop:vegard2}.

\begin{lem}\label{lem:vegard*}
  Let $X$ be a Banach space and $Y$ a closed subspace of $X$. Assume
  that $\Xss = Y^{\perp\perp} \oplus_\infty Z^\perp$ for some closed
  subspace $Z$ of $\Xs$. Let $W$ be a weak$^\ast$ open subset in $\Xss$
  and $(\xss_0,\yss_0) \in W$. Then there exist weak$^\ast$ open subsets
  $U$ of $Y^{\perp\perp}$ and $V$ of $Z^{\perp}$ such that
  $(\xss_0,\yss_0) \in U \times V \subset W$. Moreover, if $W$ is
  a relatively weak$^\ast$ open subset of $B_{\Xss}$, then $U$ and $V$
  can be chosen to be relatively weak$^\ast$ open subsets of
  $B_{Y^{\perp\perp}}$ and $B_{Z^{\perp}}$ respectively.
\end{lem}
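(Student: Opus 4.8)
The plan is to transpose, almost verbatim, the proof of Lemma~\ref{lem:vegard} to the weak$^\ast$ setting, the only structural input being that the $\ell_\infty$-decomposition $\Xss = Y^{\perp\perp}\oplus_\infty Z^\perp$ comes from a decomposition $\Xs = Z\oplus_1 Y^\perp$ of the predual. This is essentially built into the situations where the lemma will be applied (compare Theorem~\ref{teo3}), but it also follows from the hypothesis itself: $Y^{\perp\perp}$ and $Z^\perp$ are weak$^\ast$-closed complementary summands of $\Xss=\Xs{}^\ast$, so a weak$^\ast$-convergent net in $B_{\Xss}$ has images under the two band projections with weak$^\ast$-cluster points lying in the respective weak$^\ast$-closed summands, and uniqueness of the decomposition forces those projections to be weak$^\ast$-continuous on $B_{\Xss}$, hence (Krein--\v Smulian) everywhere; their pre-adjoints are then complementary projections of $\Xs$ onto $Z$ and onto $Y^\perp$, so in particular $\Xs = Z + Y^\perp$. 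I will simply record this and move on.

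With that in hand I would argue as follows. Fix a basic weak$^\ast$-neighborhood
\[
W_0 = \{\,\Phi\in\Xss : |\langle \Phi - (\xss_0,\yss_0),\,\xs_i\rangle| < 1,\ i=1,\dots,n\,\}\subseteq W,
\]
with $\xs_1,\dots,\xs_n\in\Xs$, and using $\Xs = Z\oplus_1 Y^\perp$ write $\xs_i = z_i + w_i$ with $z_i\in Z$ and $w_i\in Y^\perp$. The crucial observation is the cross-term cancellation: for $\Phi=(\Phi_1,\Phi_2)\in\Xss$ with $\Phi_1\in Y^{\perp\perp}$ and $\Phi_2\in Z^\perp$ one has $\langle\Phi,\xs_i\rangle = \langle\Phi_1,z_i\rangle + \langle\Phi_2,w_i\rangle$, since $\Phi_1$ annihilates $Y^\perp$ and $\Phi_2$ annihilates $Z$. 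Now set
\[
U_0 = \{\,\Psi\in Y^{\perp\perp} : |\langle\Psi - \xss_0, z_i\rangle| < \tfrac12,\ i=1,\dots,n\,\},\qquad
V_0 = \{\,\Theta\in Z^\perp : |\langle\Theta - \yss_0, w_i\rangle| < \tfrac12,\ i=1,\dots,n\,\}.
\]
Because $z_i,w_i\in\Xs$, the sets $U_0$ and $V_0$ are relatively weak$^\ast$-open in $Y^{\perp\perp}$ and $Z^\perp$ respectively; they contain $\xss_0$ and $\yss_0$; and the identity above gives $U_0\times V_0\subseteq W_0\subseteq W$ (the two halves add up to less than $1$). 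This settles the first assertion.

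For the ``moreover'' part, write $W = \widetilde W\cap B_{\Xss}$ with $\widetilde W$ weak$^\ast$-open, apply the construction to $\widetilde W$ to obtain $U_0,V_0$, and put $U = U_0\cap B_{Y^{\perp\perp}}$ and $V = V_0\cap B_{Z^\perp}$, which are relatively weak$^\ast$-open in the respective balls and contain $\xss_0,\yss_0$ (from $(\xss_0,\yss_0)\in B_{\Xss}$ and the $\ell_\infty$-norm, $\|\xss_0\|,\|\yss_0\|\le 1$). If $\Psi\in U$ and $\Theta\in V$ then $\|(\Psi,\Theta)\|_{\Xss}=\max(\|\Psi\|,\|\Theta\|)\le 1$, so $(\Psi,\Theta)\in B_{\Xss}$, while $(\Psi,\Theta)\in U_0\times V_0\subseteq\widetilde W$; hence $(\Psi,\Theta)\in W$, i.e.\ $U\times V\subseteq W$. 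I do not anticipate a genuine obstacle here: the only points that need care are the cross-term cancellation (which pins down why $z_i$ must lie in $Z$ and $w_i$ in $Y^\perp$, i.e.\ why the predual splitting is exactly what is needed) and consistently reading the product $U\times V$ through the internal identification $\Xss = Y^{\perp\perp}\oplus_\infty Z^\perp$; the rest is the bookkeeping of Lemma~\ref{lem:vegard} carried over to the dual.
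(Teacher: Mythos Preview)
Your proposal is correct and follows essentially the same route as the paper, which simply records that the proof is ``similar to the proof of Lemma~\ref{lem:vegard}''. You supply the extra detail the paper omits---namely that the cross-term cancellation requires the predual splitting $\Xs = Z \oplus_1 Y^\perp$, which is in any case an explicit hypothesis in the places (Proposition~\ref{prop:vegard*}, Theorem~\ref{thm-memb-strong}) where the lemma is invoked; your sketch that this also follows from the stated hypothesis via weak$^\ast$-continuity of the projections is sound once one notes that the $\ell_\infty$-norm keeps the projected nets bounded, so any weak$^\ast$-cluster point argument combined with uniqueness of the decomposition goes through.
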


\begin{proof}
  The proof is similar to the proof of Lemma \ref{lem:vegard}.
\end{proof}

\begin{prop}\label{prop:vegard*}
  Let $Y$ be a closed subspace of a Banach space $X$ such that $\Xss =
  Y^{\perp\perp} \oplus_\infty Z^{\perp}$ for some closed subspace $Z$
  of $\Xs$. If every finite convex combination of weak$^\ast$
  slices of $Y^{\perp\perp}$ has diameter 2 then every finite
  convex combination of weak$^\ast$ slices of $\Xss$ has diameter 2.
\end{prop}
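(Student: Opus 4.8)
The plan is to prove Proposition~\ref{prop:vegard*} as the weak$^\ast$-analogue of Proposition~\ref{prop:vegard2}: replace the weak topology by the weak$^\ast$ topology, ordinary slices by weak$^\ast$ slices, Lemma~\ref{lem:vegard} by Lemma~\ref{lem:vegard*}, and Bourgain's lemma (\cite[Lemma~II.1~p.~26]{GGMS}) by its weak$^\ast$ counterpart, namely that every non-empty relatively weak$^\ast$ open subset of the unit ball of a dual Banach space contains a finite convex combination of weak$^\ast$ slices. Write $\Xss = Y^{\perp\perp} \oplus_\infty Z^\perp$, so that $B_{\Xss} = B_{Y^{\perp\perp}} \times B_{Z^\perp}$, and let $P \colon \Xss \to Y^{\perp\perp}$ be the associated norm-one projection. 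Given a finite convex combination $W = \sum_{i=1}^n \lambda_i S_i$ of weak$^\ast$ slices $S_i = \{\xss \in B_{\Xss} \colon \xss(\xs_i) > 1-\varepsilon_i\}$ of $B_{\Xss}$, it suffices, just as in Proposition~\ref{prop:vegard2}, to exhibit inside $P(W)$ a finite convex combination of relatively weak$^\ast$ open subsets of $B_{Y^{\perp\perp}}$: applying the weak$^\ast$ Bourgain lemma to each of these sets and recombining then places a finite convex combination of weak$^\ast$ slices of $B_{Y^{\perp\perp}}$ inside $P(W)$, the hypothesis gives that set diameter $2$, and since $\|P\| \le 1$ we conclude that $W$ has diameter $2$.

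To produce such a set inside $P(W)$, note that each weak$^\ast$ slice $S_i$ is a non-empty relatively weak$^\ast$ open subset of $B_{\Xss}$. Pick $w_0 = \sum_{i=1}^n \lambda_i w_0^i \in W$ with $w_0^i \in S_i$; applying Lemma~\ref{lem:vegard*} to $S_i$ at $w_0^i$ gives relatively weak$^\ast$ open sets $U_i \subseteq B_{Y^{\perp\perp}}$ and $V_i \subseteq B_{Z^\perp}$ with $w_0^i \in U_i \times V_i \subseteq S_i$. Then $\sum_{i=1}^n \lambda_i (U_i \times V_i) = \bigl(\sum_{i=1}^n \lambda_i U_i\bigr) \times \bigl(\sum_{i=1}^n \lambda_i V_i\bigr) \subseteq W$, and since $B_{\Xss}$ is the box $B_{Y^{\perp\perp}} \times B_{Z^\perp}$ and $\sum_{i=1}^n \lambda_i V_i$ is non-empty (it contains the $Z^\perp$-component of $w_0$), applying $P$ yields $P(W) \supseteq \sum_{i=1}^n \lambda_i U_i$, which is the desired finite convex combination of relatively weak$^\ast$ open subsets of $B_{Y^{\perp\perp}}$. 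One should also record that the weak$^\ast$ topology $Y^{\perp\perp}$ inherits from $(\Xss, \sigma(\Xss,\Xs))$ is its intrinsic weak$^\ast$ topology $\sigma(\Yss, \Ys)$, which is immediate from $\Ys = \Xs/Y^\perp$; hence the weak$^\ast$ slices so produced are weak$^\ast$ slices of $Y^{\perp\perp}$ in the sense of the hypothesis.

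The only ingredient that is not routine bookkeeping is the weak$^\ast$ version of Bourgain's lemma, which I would justify by the same finite-dimensional reduction that underlies the classical statement. For $\xs_1,\dots,\xs_m \in \Xs$ the map $\Phi(\xss) = (\xss(\xs_1),\dots,\xss(\xs_m))$ carries $B_{\Xss}$ onto a symmetric convex body $K \subseteq \mathbb{R}^m$, carries weak$^\ast$ slices of $B_{\Xss}$ onto slices of $K$, and conversely every slice of $K$ is the $\Phi$-image of a weak$^\ast$ slice of $B_{\Xss}$ (after normalizing the realizing functional). One is thus reduced to the purely finite-dimensional fact that, for a given $p \in K$ and a cube $Q$ about $p$, there is a finite convex combination of slices of $K$ contained in $Q$; this follows by writing $p$ as a Carath\'eodory combination of extreme points of $K$, approximating each extreme point by an exposed one (Straszewicz's theorem), and using compactness to obtain thin exposing slices. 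Pulling the resulting convex combination back through $\Phi$ proves the weak$^\ast$ Bourgain lemma. With this in hand, the remaining details — that $U_i \times V_i \subseteq S_i$, that a Minkowski sum of products is the product of the Minkowski sums, and that intersecting with the box $B_{Y^{\perp\perp}} \times B_{Z^\perp}$ factors coordinatewise — are exactly as in the proof of Proposition~\ref{prop:vegard2}.
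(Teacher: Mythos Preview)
Your proposal is correct and follows exactly the route the paper intends: the paper's own proof of Proposition~\ref{prop:vegard*} simply reads ``The proof is similar to the proof of Proposition~\ref{prop:vegard2},'' and you have carried out precisely that weak$^\ast$ analogue, including the appropriate use of Lemma~\ref{lem:vegard*} and the weak$^\ast$ version of Bourgain's lemma. Your added remarks---that the inherited weak$^\ast$ topology on $Y^{\perp\perp}$ agrees with $\sigma(\Yss,\Ys)$, and the finite-dimensional justification of the weak$^\ast$ Bourgain lemma---are correct and fill in details the paper leaves implicit.
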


\begin{proof}
  The proof is similar to the proof of Proposition \ref{prop:vegard2}.
\end{proof}

We conclude this section by
proving that if $Y$ is a proper $M$-ideal in $X$ and the range of the
$L$-projection in $\Xs$ is 1-norming for $X$, then both $X$ and $Y$
have the strong diameter 2 property.
The proof is inspired by the proof of \cite[Proposition~2.3]{LP}.
\begin{thm}\label{thm-memb-strong}
  Let $Y$ be a proper $M$-ideal in $X$, i.e. $\Xs = Z \oplus_1 Y^{\perp}$
  for some subspace $Z$ of $\Xs$. If $Z$ is 1-norming for $X$, then
  both $X$ and $Y$ have the strong diameter 2 property.
  
  In particular, if $X$ is proper $M$-embedded, then both $X$ and $\Xss$
  have the strong diameter 2 property.
\end{thm}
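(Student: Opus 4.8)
The plan is to establish the strong diameter 2 property for $Y$ first, via the criterion of Lemma~\ref{strongsufficient}, and then to push it up to $X$ through the bidual using the $M$-structure, Proposition~\ref{prop:vegard*}, and Goldstine's theorem. I would begin by recording two routine consequences of the hypotheses. Dualising $\Xs=Z\oplus_1 Y^\perp$ gives the decomposition $\Xss=Y^{\perp\perp}\oplus_\infty Z^\perp$, since the adjoint of the $L$-projection of $\Xs$ onto $Z$ is an $M$-projection of $\Xss$ whose range is $Y^{\perp\perp}$ and whose kernel is $Z^\perp$. Moreover restriction to $Y$ identifies $Z$ isometrically with $\Ys$, so the hypothesis that $Z$ is $1$-norming for $X$ says precisely that the canonical map $X\to\Yss$ is an isometric embedding extending $Y\hookrightarrow\Yss$; hence $Y\subseteq X\subseteq\Yss$, and since $Y\ne X$ by properness, $Y$ is non-reflexive.

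Next I would check the hypothesis of Lemma~\ref{strongsufficient} for $Y$: given slices $S_j=S(\ys_j,\varepsilon_j)$ of $B_Y$, $j=1,\dots,n$, and $\varepsilon>0$, I must find $h_j\in S_j$ and a single $\varphi\in B_Y$ with $h_j\pm\varphi\in S_j$ and $\|\varphi\|>1-\varepsilon$. Pick $h_j^{0}\in B_Y$ well inside $S_j$, namely with $\ys_j(h_j^{0})$ close to $1$. Because $Y$ is non-reflexive, the finite-codimensional subspace $\bigcap_j\ker\ys_j$ of $Y$ contains unit vectors; feeding such a vector through the 3-ball property of the $M$-ideal $Y$ in $X$ produces a $\varphi\in B_Y$ with $\|\varphi\|$ as close to $1$ as we like that is ``$M$-almost orthogonal'' to all the $h_j^{0}$, in the sense that $\|h_j^{0}\pm\varphi\|\le 1+\eta$ for a prescribed small $\eta$, while still $|\ys_j(\varphi)|$ is small. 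A rescaling by a factor $1/(1+O(\eta))$, carried out exactly as in the uniform algebra argument of Theorem~\ref{cor:unifalg-strong}, then turns $h_j^{0}$ and $\varphi$ into admissible points $h_j\in S_j$ and $\varphi\in B_Y$ with $h_j\pm\varphi\in S_j$. I expect this manufacture of the common direction $\varphi$ to be the only point requiring real work; everything surrounding it is soft duality.

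Finally I would bootstrap from $Y$ to $X$. Since $Y$ has the strong diameter 2 property, Goldstine's theorem makes every finite convex combination of slices of $B_Y$ weak$^\ast$-dense in the corresponding finite convex combination of weak$^\ast$ slices of $B_{\Yss}=B_{Y^{\perp\perp}}$; as the norm is weak$^\ast$-lower semicontinuous, every finite convex combination of weak$^\ast$ slices of $B_{Y^{\perp\perp}}$ has diameter $2$. Applying Proposition~\ref{prop:vegard*} with the decomposition $\Xss=Y^{\perp\perp}\oplus_\infty Z^\perp$ shows that every finite convex combination of weak$^\ast$ slices of $B_{\Xss}$ has diameter $2$, and one more use of Goldstine---finite convex combinations of slices of $B_X$ are weak$^\ast$-dense in the corresponding finite convex combinations of weak$^\ast$ slices of $B_{\Xss}$---gives the strong diameter 2 property for $X$. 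For the last assertion, a proper $M$-embedded $X$ is a proper $M$-ideal in $\Xss$ with $L$-complement $Z=\Xs$, which is trivially $1$-norming for $\Xss$, so the statement just proved, applied with $(\Xss,X)$ in place of $(X,Y)$, gives the strong diameter 2 property for both $\Xss$ and $X$.
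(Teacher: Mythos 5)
Your overall architecture (the decomposition $\Xss = Y^{\perp\perp}\oplus_\infty Z^\perp$, Proposition~\ref{prop:vegard*}, Goldstine plus weak$^\ast$ lower semicontinuity, and the specialization to $(\Xss,X)$ for the $M$-embedded case) matches the paper's, but the step that carries all the weight --- manufacturing the common direction $\varphi$ --- has a genuine gap. You want $\varphi\in B_Y$ with $\|\varphi\|>1-\varepsilon$ and $h_j\pm\varphi\in S_j\subset B_Y$, and you propose to obtain it by ``feeding a unit vector of $\bigcap_j\ker\ys_j$ through the 3-ball property''. The $n$-ball property of the $M$-ideal $Y$ in $X$ takes an element $x\in B_X$ and elements $h_j^0\in B_Y$ and returns $y\in Y$ with $\|h_j^0\pm(x-y)\|\le 1+\eta$; the almost-orthogonal direction it produces is $\varphi=x-y$, which lies in $Y$ only if $x\in Y$ --- and then $\|\varphi\|\ge\operatorname{dist}(x,Y)=0$ gives no lower bound, so the construction is vacuous. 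If instead you feed it a unit vector $u\in Y\cap\bigcap_j\ker\ys_j$, the ball condition is satisfied trivially by $y=u$, yielding $\varphi=0$; and there is no reason an arbitrary such $u$ should satisfy $\|h_j^0\pm u\|\le 1+\eta$. In short, the $M$-ideal machinery delivers an almost-unit direction in $X$ (with norm bounded below by $\operatorname{dist}(x,Y)$), not in $Y$, and your sketch does not bridge that gap. (The claim that one may take $\varphi\in B_Y$ is essentially the assertion that $Y$ is ``almost square''; this is true for such $Y$ but is a nontrivial theorem in its own right, not a consequence of the one-line argument you give.)

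The paper sidesteps exactly this difficulty by running Lemma~\ref{strongsufficient} not inside $Y$ but for the weak$^\ast$ slices of $B_{Y^{\perp\perp}}\cong B_{\Yss}$: it takes $h_i=ry_0^i\in Y$, picks $x\in S_X$ with $\|x+Y\|>1-\delta$, and invokes Werner's net characterization of $M$-ideals to get $y_d\in Y$ with $y_d\to x$ in $\sigma(X,Z)$ and $\limsup_d\|y_0^i\pm(x-y_d)\|\le 1$; the common direction is $\varphi=r(x-y_d)\in X$, whose norm is at least $r\|x+Y\|$. The strong diameter 2 property of $Y$ itself is only deduced at the very end, from the corresponding statement for slices of $B_X$, via the $\sigma(X,Z)$-density of each slice of $B_Y$ in the corresponding slice of $B_X$ together with the $\sigma(X,Z)$-lower semicontinuity of the norm (this is where the hypothesis that $Z$ is 1-norming enters). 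If you reorganize your argument so that $\varphi$ is allowed to live in $X$ (or in $\Yss$) and the property for $Y$ is recovered by density at the end, the remaining transfer steps you describe go through as written.
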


\begin{proof}
  We have $\Xss = Y^{\perp\perp} \oplus_\infty Z^\perp$.
  Let $i=1,2,\ldots,n$, $\varepsilon_i>0$, and $z_i \in S_Z$, and
  consider the weak$^\ast$ slices
  \begin{equation*}
    S_i^\ast=\{y^{\perp\perp} \in B_{Y^{\perp\perp}}:
    y^{\perp\perp}(z)>1-\varepsilon_i\}.
  \end{equation*}
  Let $\lambda_i > 0$ such that $\sum_{i=1}^n \lambda_i = 1$.
  First we prove that the convex combination $S^\ast=\sum_{i=1}^n
  \lambda_i S_i^\ast$ has diameter 2. Note that by
  Goldstine's theorem the slice $S_i = S(z_i,\varepsilon_i)$ of $B_Y$ is weak$^\ast$
  dense in $S_i^\ast$ for each $i$. In particular $S_i^\ast \cap B_Y \neq
  \emptyset$ for every $i=1,2,\ldots,n$.
  Choose $y_0^i \in S_i^\ast \cap B_Y$.
  
  Given $\delta>0$ it is possible to find $x \in S_X$ such that $\|x
  +Y\|>1-\delta$ since $Y$ is a proper subspace of $X$.
  Using the $M$-ideal property, by \cite[Proposition 2.3]{W2} there
  is a net $(y_d)$ in $Y$ such that $y_d \to x$ in the
  $\sigma(X,Z)$-topology with
  $\limsup_d\|y_0^i \pm (x-y_d)\| \le 1$. Thus for any given
  $0<r_i<1$ we can find $d_0^i$ such that
  \begin{equation*}
    r_i\|y_0^i \pm (x-y_d)\| \le 1
  \end{equation*}
  whenever $d \ge d_0^i$.
  Since each $S_i$ is weakly open in $Y$ and the
  $\sigma(X,Z)$-topology on $Y$ is just the weak topology
  we may assume that
  \begin{equation*}
    r_i(y_0^i \pm (x-y_d)) \in S_i \subset S_i^\ast
  \end{equation*}
  for $d \ge d_0^i$. Let $r=\max_{i} r_i$ and $d_0 \ge d_0^i$ for
  every $i=1,2,\ldots,n$.
  Using Lemma~\ref{strongsufficient} with $h_i = r y_0^i$
  and $\varphi = r(x-y_d)$ we get that $S^\ast$ has diameter 2
  since
  \begin{equation*}
    \|\varphi\| \ge r\|x+Y\| > r(1-\delta),
  \end{equation*}
  and $r$ and $\delta$ can be chosen arbitrarily close to $1$
  and $0$ respectively.
  
  From Proposition~\ref{prop:vegard*} and
  $\Xss = Y^{\perp\perp} \oplus_\infty Z^\perp$
  we get that every finite convex combination of weak$^\ast$
  slices of $B_\Xss$ has diameter 2. By the weak$^\ast$
  density of $B_X$ in $B_{\Xss}$ and the weak$^\ast$ lower
  semi-continuity of the norm, it follows that $X$ has
  the strong diameter 2 property.

  Since $Z$ is 1-norming the norm on $X$ is
  $\sigma(X,Z)$-lower semi-continuous.
  That $Y$ has the strong diameter 2 property is
  immediate since a functional $\ys \in S_{\Ys}$
  uniquely extends to a functional in $S_{\Xs}$
  and the slice $S(\ys,\varepsilon)$ of $B_Y$ is $\sigma(X,Z)$-dense
  in the slice $S(\ys,\varepsilon)$ of $B_X$.
\end{proof}

\section{Some concluding remarks and questions}\label{questsection}
As remarked in Section 1 we do not know if the three diameter 2
properties really are different. Having an answer to this question is
clearly important for future research on diameter 2 spaces. Our
conjecture is that they are not equal.

Meanwhile, and especially if our conjecture is correct, some questions
naturally come to mind:

\begin{itemize}
\item [(a)] Can one conclude diameter 2 property or even strong
  diameter 2 property in Proposition 2.2?
\item [(b)] From Theorems 2.6 and 2.7(i), how are diameter 2
  properties in general preserved by tensor products? (An important
  recent contribution here is \cite{ABR}.)  
\item [(c)] Is Theorem 3.2 true for the strong diameter 2 property?
\item [(d)] Does $\dim Z(X^{(\infty})=\infty$ imply strong diameter 2 property?
\item [(e)] Note that $X$ inherits all of the three diameter 2
  properties from $X^{\ast\ast}$. In general, which subspaces of a
  space with the (local, strong) diameter 2 property inherits this
  property? 
\end{itemize}

\section*{Acknowledgement}
The authors would like to thank the referee for many valuable
suggestions that helped to improve the presentation.

\providecommand{\bysame}{\leavevmode\hbox to3em{\hrulefill}\thinspace}
\providecommand{\MR}{\relax\ifhmode\unskip\space\fi MR }
% \MRhref is called by the amsart/book/proc definition of \MR.
\providecommand{\MRhref}[2]{%
  \href{http://www.ams.org/mathscinet-getitem?mr=#1}{#2}
}
\providecommand{\href}[2]{#2}

\end{document}